\journal{Journal of Functional Analysis}
\newtheorem{theorem}{Theorem}[section]
\newtheorem{lemma}[theorem]{Lemma}
\newtheorem{proposition}[theorem]{Proposition}
\newtheorem{remark}[theorem]{Remark}
\newtheorem{definition}[theorem]{Definition}
\newcommand{\cJ}{{\mathcal J}}
\DeclareMathOperator *{\diam}{diam}
\DeclareMathOperator *{\dist}{dist}
\DeclareMathOperator *{\Tr}{Tr}
\DeclareMathOperator *{\BBR}{\mathbb{R}}
\DeclareMathOperator *{\BBC}{\mathbb{C}}
\def\Xint#1{\mathchoice
{\XXint\displaystyle\textstyle{#1}}%
{\XXint\textstyle\scriptstyle{#1}}%
{\XXint\scriptstyle\scriptscriptstyle{#1}}%
{\XXint\scriptscriptstyle\scriptscriptstyle{#1}}%
\!\int}
\def\XXint#1#2#3{{\setbox0=\hbox{$#1{#2#3}{\int}$ }
\vcenter{\hbox{$#2#3$ }}\kern-.6\wd0}}
\def\dint{\Xint-}
\begin{document}

\begin{frontmatter}

\title{
Extrapolation of the Dirichlet problem for elliptic equations with complex coefficients}

\author{Martin Dindo\v{s}}
\address{School of Mathematics,
         The University of Edinburgh and Maxwell Institute of Mathematical Sciences, UK}
\ead{M.Dindos@ed.ac.uk}

\author{Jill Pipher}
\address{Department of Mathematics, 
	Brown University, USA}
\ead{jill\_pipher@brown.edu}

\begin{abstract}
In this paper, we prove an extrapolation result for complex coefficient 
divergence form operators that satisfy a strong ellipticity condition known as $p$-{\it ellipticity}. 
Specifically, let $\Omega$ be a chord-arc domain in $\mathbb R^n$ and the operator $\mathcal L = \partial_{i}\left(A_{ij}(x)\partial_{j}\right) 
+B_{i}(x)\partial_{i} $ be elliptic, with $|B_i(x)| \le K\delta(x)^{-1}$ for a small $K$. Let
$p_0 = \sup\{p>1: A \,\,\mbox{is}\,\, \mbox{$p$-elliptic}\}$.

We establish that if the $L^q$ Dirichlet problem is solvable for $\mathcal L$ for some $1<q< \frac{p_0(n-1)}{(n-2)}$, then the $L^p$ Dirichlet problem is solvable for all $p$ in the range $\left[q,  \frac{p_0(n-1)}{(n-2)}\right)$. 
In particular, if the matrix $A$ is real, or $n=2$, the $L^p$ Dirichlet problem is solvable for $p$ in the range $[q, \infty)$.
\end{abstract}

\begin{keyword}
elliptic operators with complex coefficients \sep Dirichlet problem \sep extrapolation
\MSC[2008] 35J25
\end{keyword}

\end{frontmatter}

\section{Introduction}\label{S:Intro}
Over the past several decades, a well developed theory of solvability of boundary value problems for real second order elliptic and parabolic
equations has evolved, a theory that connects and quantifies the range of solvability with a variety of ways of measuring smoothness of 
the coefficients and of the boundary domain. While the literature is vast, some early advances in this area include \cite{CFMS},
\cite{Dah}, \cite{DaJ}, \cite{FKP}, \cite{FJR}, \cite{JK}, \cite{JKRellich} \cite{LHannals}, and \cite{Ve};  for a small sample of some more
recent contributions,
we point to \cite{AHLMT}, \cite{AHMNT}, \cite{DaFM}, \cite{DHw}, \cite{DHM}, \cite{DKP}, \cite{DPR}, \cite{HKMP}, \cite{HMM}, \cite{HMT}, \cite{KP}, \cite{KKPT},  \cite{LHmem}, 
and \cite{To}. Equally important are boundary value problems for systems of equations, higher order equations, and second order equations with complex coefficients, but solvability for these equations presents many more challenges. The main challenges to a comparably complete
understanding in these three settings are the lack of regularity of solutions, such as that guaranteed in the real valued setting
 by the De Giorgi-Nash-Moser theory, and the lack of even a weak
(Agmon-Miranda) maximum principle. Much of our understanding of solvability of real second order elliptic/parabolic equations, and
how solvability for particular function spaces of boundary data connects to
the geometry of the domain, depends on these principles. 

In this paper, we take up the question of extrapolation of the solvability of a particular boundary value, the Dirichlet problem, for complex
coefficient divergence form elliptic operators. We use the term {\it extrapolation} to mean that solvability of the Dirichlet
problem for boundary data in one function space implies solvability in a range of function spaces. Extrapolation is not 
possible for arbitrary elliptic complex coefficient operators. Our goal in this paper is to provide natural and checkable structural conditions on
the operator for which extrapolation holds. Many ideas we develop here are related to recent work of Shen \cite{S2} on extrapolation of systems of elliptic PDEs in Lipschitz domains, which we became aware of after this paper was submitted. In forthcoming work, we have introduced the notion of $p$-ellipticity 
for more general second order complex coefficient elliptic systems, permitting an extension of Shen's results for these systems.

The main result of this paper is an extrapolation result for complex coefficient 
divergence form operators that satisfy a strong ellipticity condition known as $p$-{\it ellipticity}. 
Essentially, $p$-ellipticity measures how close the operator is to being real-valued. The interval where an operator is $p$-elliptic depends on the size of the imaginary part of the coefficients. If the interval of $p$-elipticity is $(1,\infty)$, then the operator is real-valued.
We will discuss this
condition in more detail below.

Our boundary value problems are formulated for measurable data in a Lebesgue space; solvability is
described in terms of nontangential convergence and a priori estimates on a nontangential maximal function.
The complex-valued setting is very different from the real-valued theory that has been well developed 
over several decades since the fundamental regularity results of De Giorgi - Nash - Moser.
It is well known that real-valued second order elliptic operators in divergence form satisfy a maximum principle;
in the language of nontangential boundary value problems, this principle translates into solvability of the Dirichlet
problem with data in $L^\infty$ on the boundary of the domain. 
This in turn entails that solvability of boundary value problems with data in a Lebesgue space $L^q(\partial \Omega)$ {\it extrapolates},
via interpolation with the endpoint $L^\infty$, to solvability in all $L^p$, $p\geq q$.
There are many techniques that can establish solvability for a single value of $p$, for example the Kato-type techniques
for a special class of complex operators (\cite{AAAHK}), the Rellich-type inequalities for real symmetric operators
(\cite{JKRellich}), or methods such as \cite{KP01} and \cite{DPP} 
for coefficients satisfying a Carleson measure condition. But in the complex valued case, 
there is no maximum principle in general. For this reason, aside from this paper and \cite{S2}, 
extrapolation results have only been shown in the presence of
$L^\infty$ estimates for solutions, for example, in the limited setting of small perturbations of real-valued operators.

We now give some background for the results in this paper.
The work in \cite{DPcplx} initiated the study of higher regularity of solutions to {\it complex} operators of the form
\begin{equation}
\mathcal L= \partial_{i}\left(A_{ij}(x)\partial_{j}\right) 
+B_{i}(x)\partial_{i}
\end{equation}
 where $A:= (A_{ij})$ is uniformly elliptic and bounded and $|B_i(x)| \le K\delta(x)^{-1}$, under a
structural assumption called $p$-ellipticity.  These new regularity results were used to establish solvability of the Dirichlet
problem for a certain class of such operators in domains $\Omega$ with boundary data in $L^q(\partial \Omega)$ for $q$ in the
range of $p$-ellipticity. In \cite{FMZ}, the authors gave another proof of the interior higher regularity results of \cite{DPcplx}.
They were then able to prove boundary regularity estimates
for domains $\Omega$ satisfying certain minimal geometric conditions. 
In this paper, we use the interior regularity and its extension to the boundary to prove 
the main theorem.

\begin{theorem}\label{MainThm}
Let $\Omega$ be a chord-arc domain in $\mathbb R^n$ and $\mathcal L = \partial_{i}\left(A_{ij}(x)\partial_{j}\right) 
+B_{i}(x)\partial_{i} $ be a second order operator with bounded and measurable coefficients $A$ and $|B|\lesssim \delta(x)^{-1}$. Define 
$$p_0 = \sup\{p>1: A \,\,\mbox{is}\,\, \mbox{p-elliptic}\}.$$ 
Assume that the $L^q$ Dirichlet problem is solvable for $\mathcal L$ for some $q\in (1,\frac{p_0(n-1)}{(n-2)})$  (if $p_0 = \infty$ or $n=2$ we require $q\in (1,\infty)$).

\noindent Then the $L^p$ Dirichlet problem is solvable for 
$\mathcal L$ for $p$ in the range $[q,  \frac{p_0(n-1)}{(n-2)})$, if one of the following constraints holds on the size of the vector $B$.
\begin{itemize}
\item $\Omega$ is bounded and $B(x)=o(\delta(x)^{-1})$ as $x\to\partial\Omega$.
\item $\Omega$ is bounded and $\limsup_{x\to\partial\Omega} |B(x)\delta(x)|\le K$. Here $K=K(A,p,n)>0$ is sufficiently small. 
\item $\Omega$ is unbounded and $|B(x)\delta(x)|\le K$ for all $x\in\Omega$. Here $K=K(A,p,n)>0$ is sufficiently small. 
\end{itemize}

\end{theorem}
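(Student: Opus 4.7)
The plan is to follow a Shen-type extrapolation strategy: since the complex-coefficient setting lacks a maximum principle and hence an $L^\infty$ endpoint, genuine interpolation is unavailable, so instead I would establish a weak reverse H\"older inequality for the nontangential maximal function $N(u)$ on boundary balls, and then invoke a ``good-$\lambda$''/self-improvement lemma of Shen type to upgrade from $L^q$ to the entire range $[q, p_0(n-1)/(n-2))$. The hypothesized $L^q$-solvability provides the base scale; the $p$-ellipticity of $A$, via the interior higher regularity of \cite{DPcplx} and its boundary refinement in \cite{FMZ}, supplies the higher integrability of $\nabla u$ that drives the self-improvement. The endpoint exponent arises naturally as the boundary Sobolev conjugate of the interior gradient threshold $p_0$ on the $(n-1)$-dimensional set $\partial\Omega$.

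The core technical estimate would be localized on a surface ball. Fix a boundary ball $\Delta = B(x,r)\cap\partial\Omega$ at a scale $r \ll \diam(\Omega)$, and for boundary data $f$ split $f = f\chi_{2\Delta} + f\chi_{\partial\Omega\setminus 2\Delta}$, with corresponding solutions $u = u_1 + u_2$. The piece $u_1$, with data supported in $2\Delta$, is handled directly by the assumed $L^q$-solvability together with H\"older's inequality, contributing a term bounded by an $L^q$ average of $N(u)$ over $2\Delta$. For $u_2$, which vanishes on $2\Delta$ and is $\mathcal L$-harmonic in a Carleson region above $\Delta$, I would apply the $p$-elliptic Caccioppoli / higher-integrability estimate from \cite{DPcplx} and \cite{FMZ} to obtain $L^s$ control of $\nabla u_2$ for every $s < p_0$ in this region. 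Combining these gradient bounds with a tangential square-function / $N$-versus-$S$ type inequality and the Sobolev trace embedding on $\partial\Omega$ produces the desired local reverse H\"older estimate for $N(u)$ at any exponent $t < p_0(n-1)/(n-2)$.

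The drift $B$ is absorbed perturbatively according to the three bullets. Under the first hypothesis, $|B|\delta(x) = o(1)$ near $\partial\Omega$, so for any $\varepsilon>0$ there is a collar in which $B$ is dominated by the principal part, while away from the collar, interior estimates with bounded $B$ apply. Under the second and third hypotheses, one chooses $K$ small enough that $B$ is absorbed via Young's inequality directly in the Caccioppoli step. The main obstacle I anticipate is ensuring that the boundary regularity of \cite{FMZ} is robust in the chord-arc geometry --- where one has only Ahlfors regularity and interior corkscrew points rather than full Lipschitz structure --- and that the constants in the $p$-elliptic gradient estimate remain uniform as $s$ approaches $p_0$, so that Shen's self-improvement lemma can be pushed all the way to the endpoint exponent $p_0(n-1)/(n-2)$. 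With these two points in place, Shen's lemma delivers $\|N(u)\|_{L^p} \lesssim \|f\|_{L^p}$ for every $p$ in the claimed range.
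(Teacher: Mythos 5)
Your overall strategy --- Shen's extrapolation lemma applied to $T\colon f \mapsto \tilde N_{2,a}(u)$, reduced to a localized weak reverse H\"older inequality for the nontangential maximal function with data vanishing on an enlarged surface ball --- is exactly the architecture of the paper's proof, and your decomposition $f = f\chi_{2\Delta} + f\chi_{\partial\Omega\setminus 2\Delta}$ is simply the interior of Shen's lemma unpacked. The genuine gap is in the engine that is supposed to produce the exponent $p_0(n-1)/(n-2)$. You propose to extract ``$L^s$ control of $\nabla u_2$ for every $s<p_0$'' from $p$-ellipticity and conclude by a Sobolev trace embedding. But $p$-ellipticity does not yield $L^s$ integrability of the gradient up to $s=p_0$; that would be a Meyers-type estimate, which for merely bounded measurable (complex) coefficients holds only for $s$ in a small neighborhood of $2$ and has no relation to $p_0$ (note $p_0=\infty$ for real coefficients, while $\nabla u\in L^s_{loc}$ for all $s$ certainly fails there). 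What $p$-ellipticity actually supplies (Lemmas \ref{LpAvebig}, \ref{LpAve}, \ref{BRH}) is higher integrability of $u$ itself together with a \emph{weighted} Caccioppoli inequality $\int |u|^{s-2}|\nabla u|^2 \lesssim r^{-2}\int |u|^s$, valid up to the boundary where $\Tr u=0$. The paper therefore works not with $\nabla u$ in $L^s$ but with $v=|u|^{s/2-1}u$, whose gradient satisfies $|\nabla v|^2\approx |u|^{s-2}|\nabla u|^2$ and is controlled in $L^2$. The key step is a pointwise bound of the interior averages of $v$ by a truncated square function of $\nabla v$ (estimate \eqref{Conebound}), obtained by telescoping Poincar\'e inequalities along Harnack chains and using that $v$ vanishes at the boundary; the exponent gain then comes from fractional integration of order $1/2$ on the $(n-1)$-dimensional boundary, raising $L^2$ to $L^r$ with $r=2(n-1)/(n-2)$, whence $p=rs/2=s(n-1)/(n-2)\to p_0(n-1)/(n-2)$ as $s\to p_0$. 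A Sobolev trace embedding applied to a (nonexistent) $L^s$ gradient bound would not reproduce this exponent.

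Two further points are missing. First, the maximal function over $\Gamma_a(Q)$, $Q\in\Delta$, also sees points $y$ with $\delta(y)>\diam\Delta$, where the vanishing of the data near $\Delta$ gives no help; the paper splits $\tilde N_{2,a}(u)=\max\{\mathcal M_1(u),\mathcal M_2(u)\}$ and controls the far part by showing (Proposition \ref{Prop}) that any such $y$ belongs to $\Gamma_a(P)$ for a set of $P\in 2\Delta$ of proportional measure --- this is precisely why the modified corkscrew regions $\Gamma_a(Q)$, rather than the standard $\gamma_a(Q)$, are introduced. Second, after the square-function step one is left with an $L^s$ average of $u$ over a Carleson region; lowering $s$ to the assumed exponent $q$ requires the self-improved boundary reverse H\"older inequality \eqref{eq-rh} with an arbitrarily small exponent on the right-hand side, a nontrivial (Fefferman--Stein/Shen style) upgrade of \cite{FMZ}. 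Your worry about uniformity of constants as $s\to p_0$ is moot: one fixes $s<p_0$ strictly, proves the reverse H\"older at $p=s(n-1)/(n-2)$, and takes the union over such $s$.
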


In particular, when $p_0 = \infty$, i.e., the matrix $A$ is real, the $L^p$ Dirichlet problem is solvable for $p$ in the range $[q, \infty)$. The same 
conclusion holds when the dimension $n=2$, since the main inequalities of \eqref{eq-pf13} hold for all $r$.

In the next section, we discuss the background in more detail and define the terms used in the statement of the theorem. In section 3 we give the proof of the main theorem.

\section{Background and definitions}
\subsection{$p$-ellipticity}
A concept related to $p$-ellipticity
was introduced in \cite{CM}, where the authors investigated the $L^p$-dissipativity of second order divergence complex coefficient operators. Later, and
independently, we (\cite{DPcplx}) and Carbonaro and Dragi\v{c}evi\'c (\cite{CD}) gave equivalent definitions
of this property - the term ``$p$-ellipticity" was coined in \cite{CD} and their definition is the one we introduce below.
To introduce this, we define, for $p>1$, the ${\mathbb R}$-linear map $\cJ_p:{\mathbb C}^n\to {\mathbb C}^n$ by
$$\cJ_p(\alpha+i\beta)=\frac{\alpha}{p}+i\frac{\beta}{p'}$$
where $p'=p/(p-1)$ and $\alpha,\beta\in{\mathbb R}^n$.

\begin{definition}\label{pellipticity} Let $\Omega\subset{\mathbb R}^n$. Let $A:\Omega\to M_n(\mathbb C)$, where $M_n(\mathbb C)$ is the space of $n\times n$ complex valued matrices. We say that $A$ is $p$-elliptic if for a.e. $x\in\Omega$
\begin{equation}\label{pEll}
\mathscr{R}e\,\langle A(x)\xi,\cJ_p\xi\rangle \ge \lambda_p|\xi|^2,\qquad\forall \xi\in{\mathbb C}^n
\end{equation}
for some $\lambda_p>0$ and there exists $\Lambda>0$ such that 
\begin{equation}
|\langle A(x)\xi,\eta \rangle| \le \Lambda |\xi| |\eta|, \qquad\forall \xi, \,\eta\in{\mathbb C}^n.
\end{equation}
\end{definition}

It is now easy to observe that the notion of $2$-ellipticity coincides with the usual ellipticity condition for complex matrices.
As shown in \cite{CD} if $A$ is elliptic, then there exists $\mu(A)>0$ such that $A$ is $p$-elliptic if and only if
$\left|1-\frac2p\right|<\mu(A).$
Also $\mu(A)=1$ if and only if $A$ is real valued.\medskip

We give some notation. 
Here and in what follows we will use the convention that points in the interior of $\Omega$ will be denoted by uncapitalised letters such as $x,y$; while points on the boundary will be denoted by the capital letters such as $P$ or $Q$. The
expression $\Delta(Q,r):= B(Q,r) \cap \partial \Omega$ will be used to denote the surface ball centered at $Q$ of radius $r$ contained in the boundary of $\Omega$. The Carleson region associated to $\Delta(Q,r)$ is defined to be $T(\Delta):= B(Q,r) \cap \Omega$.

\subsection{Chord-arc domains (CAD)}

Our aim is to establish the extrapolation result under minimal necessary assumptions on the geometry of the domain $\Omega\subset{\mathbb R}^n$ and its boundary $\partial\Omega$. Recently, there has been substantial progress in understanding the interplay between boundary regularity of the domain and solvability of boundary value problems for elliptic operators. We start by collecting some definitions.

\begin{definition}[\bf Corkscrew condition]\label{def1.cork}
 \cite{JK}.  A domain $\Omega\subset {\mathbb R}^{n}$
satisfies the {\it Corkscrew condition} if for some uniform constant $c>0$ and
for every surface ball $\Delta:=\Delta(Q,r),$ with $Q\in \partial\Omega$ and
$0<r<\diam(\partial\Omega)$, there is a ball
$B(x_\Delta,cr)\subset B(Q,r)\cap\Omega$.  The point $x_\Delta\subset \Omega$ is called
a {\it corkscrew point relative to} $\Delta,$ (or, relative to $B$). We note that  we may allow
$r<C\diam(\partial\Omega)$ for any fixed $C$, simply by adjusting the constant $c$.
\end{definition}

\begin{definition}[\bf Harnack Chain condition]\label{def1.hc}
\cite{JK}. Let $\delta(x)$ denote the distance of $x \in \Omega$ to $\partial \Omega$. A domain
$\Omega$ satisfies the {\it Harnack Chain condition} if there is a uniform constant $C$ such that
for every $\rho >0,\, \Lambda\geq 1$, and every pair of points
$x,x' \in \Omega$ with $\delta(x),\,\delta(x') \geq\rho$ and $|x-x'|<\Lambda\,\rho$, there is a chain of
open balls
$B_1,\dots,B_N \subset \Omega$, $N\leq C(\Lambda)$,
with $x\in B_1,\, x'\in B_N,$ $B_k\cap B_{k+1}\neq \emptyset$
and $C^{-1}\diam (B_k) \leq \dist (B_k,\partial\Omega)\leq C\diam (B_k).$  The chain of balls is called
a {\it Harnack Chain}.
\end{definition}

\begin{definition}[\bf 1-sided NTA]\label{def1.1nta}
If $\Omega$ satisfies both the Corkscrew and Harnack Chain conditions, then
$\Omega$ is a {\it 1-sided NTA domain} ($\Omega$ is sometimes called a {\it uniform domain}).
\end{definition}

\begin{definition}[\bf Ahlfors-David regular]\label{def1.ADR}
A closed set $E \subset {\mathbb R}^{n}$ is $n-1$-dimensional ADR (or simply ADR) (Ahlfors-David regular) if
there is some uniform constant $C$ such that for $\sigma=H^{n-1}$ (the $n-1$ dimensional Hausdorff measure)
\begin{equation} \label{eq1.ADR}
\frac1C\, r^{n-1} \leq \sigma(E\cap B(Q,r)) \leq C\, r^{n-1},\,\,\,\forall r\in(0,R_0),Q \in E,
\end{equation}
where $R_0$ is the diameter
of $E$ (which may be infinite).  
\end{definition}

Given that we are interested in solvability of boundary value problems on $n-1$ dimensional boundaries, it is natural to assume that our domain $\Omega$ is a 1-sided NTA domain with $n-1$-dimensional ADR boundary. However, it was established in \cite[Theorem 1.2]{AHMNT} that, even in the case of the simplest second order elliptic operator (the Laplacian), an extra assumption on the regularity of the domain is required. That is, the following result was proven:

\begin{theorem}\label{t2}
Suppose that $\Omega\subset \mathbb R^n$ is a 1-sided NTA (aka uniform) domain, whose boundary is Ahlfors-David regular.
Then the following are equivalent:
\begin{enumerate}\itemsep=0.05cm
\item $\partial\Omega$ is uniformly rectifiable.
\item $\Omega$ is an NTA domain (i.e., it is a 1-sided NTA which also satisfies the Corkscrew condition in the exterior $\mathbb R^n\setminus \overline{\Omega})$.
\item $\omega\in A_\infty$.
\end{enumerate}
\end{theorem}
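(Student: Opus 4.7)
The plan is to close the cycle of implications (2) $\Rightarrow$ (3) $\Rightarrow$ (1) $\Rightarrow$ (2) under the standing hypotheses that $\Omega$ is a 1-sided NTA domain with ADR boundary. The first two implications can be recovered from the classical NTA literature, while the last one is the genuinely new content and is where I expect the principal difficulty.

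The implication (2) $\Rightarrow$ (3) is the Dahlberg--David--Jerison theorem. On a (two-sided) NTA domain with ADR boundary, harmonic measure $\omega$ based at a fixed interior corkscrew is doubling on surface balls (using the Harnack chain property and the boundary comparison principle) and satisfies the non-degeneracy bound $\omega(\Delta(Q,r))\gtrsim 1$ for a surface ball of radius comparable to $\delta$ from the pole (using the exterior Corkscrew condition together with boundary H\"older continuity). A Calder\'on--Zygmund stopping-time argument then upgrades these two facts into the reverse H\"older inequality for $d\omega/d\sigma$ characterizing $\omega\in A_\infty(\sigma)$.

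The implication (3) $\Rightarrow$ (1) is the Hofmann--Martell program. From $\omega\in A_\infty(\sigma)$ one has scale-invariant quantitative mutual absolute continuity. Introducing a Christ dyadic system on $\partial\Omega$ and running a corona decomposition, one partitions the cubes into regimes where the density $d\omega/d\sigma$ has bounded oscillation. Inside each regime, the quantitative $A_\infty$ estimate, combined with the interior Corkscrew and Harnack chain properties, produces on a fixed positive proportion of every surface ball either a big piece of a Lipschitz graph or, after a further corona step, a big piece of a chord-arc subdomain. Either way, $\partial\Omega$ satisfies the Big Pieces of Lipschitz Graphs condition of David--Semmes, which is equivalent to uniform rectifiability.

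The implication (1) $\Rightarrow$ (2) is the heart of the theorem and the main obstacle. The task is to manufacture exterior Corkscrews from uniform rectifiability plus interior uniformity. My approach would exploit the corona decomposition provided by UR: off a collection $\mathcal{B}$ of Christ cubes satisfying a Carleson packing condition, $\partial\Omega$ is approximated in Hausdorff distance, at every scale, by Lipschitz graphs $\Gamma$ of arbitrarily small slope. At a point $Q\in\partial\Omega$ and scale $r$ in such a good regime, $B(Q,r)\setminus\Gamma$ splits into two components each containing a ball of radius $\sim r$. The interior Corkscrew condition forces $\Omega$ to lie in one component; provided the slope of $\Gamma$ and its distance from $\partial\Omega$ are small enough (quantified against the 1-sided NTA constants and the ADR constant), the opposite component must sit in $\mathbb{R}^n\setminus\overline{\Omega}$ and yields an exterior corkscrew ball. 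The genuinely delicate step is handling points and scales associated to the bad collection $\mathcal{B}$: to produce an exterior corkscrew uniformly at \emph{every} $(Q,r)$, I would use the Carleson packing of $\mathcal{B}$ to find, for any $(Q,r)$, a nearby ``good'' surface ball $\Delta(Q',r')$ with $r'\sim r$ and $|Q-Q'|\lesssim r$, construct the exterior corkscrew there, and then transport it back via a Harnack-type geometric argument. Carrying out this scale-and-location transport while maintaining quantitative control on the size of the exterior ball is the main technical obstacle.
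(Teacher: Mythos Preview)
The paper does not prove this theorem; it is quoted as \cite[Theorem~1.2]{AHMNT} (Azzam--Hofmann--Martell--Nystr\"om--Toro) and invoked solely as background motivation for why one must assume the exterior corkscrew condition, i.e.\ work in chord-arc domains. There is therefore no proof in the paper to compare your proposal against.

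For what it is worth, your outline is broadly consonant with the strategy in \cite{AHMNT}. The implication (2)$\Rightarrow$(3) is the classical David--Jerison/Semmes result; (3)$\Rightarrow$(1) is the Hofmann--Martell--Uriarte-Tuero direction; and (1)$\Rightarrow$(2) is indeed the principal contribution of \cite{AHMNT}. In that paper the last implication is organized via the \emph{bilateral weak geometric lemma} characterization of uniform rectifiability rather than directly through a corona decomposition and Lipschitz graph approximation as you propose, but the underlying mechanism---good flat approximation at most scales forces a two-sided corkscrew, and a Carleson packing controls the bad scales---is the same. Your ``transport'' step for bad scales is the genuinely delicate point, and your sketch does not yet explain why a nearby good scale must exist with $r'\sim r$ (Carleson packing alone only bounds the \emph{number} of bad cubes, not their location relative to a fixed $(Q,r)$); in \cite{AHMNT} this is handled by a more careful stopping-time construction.
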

Here $\omega$ denotes harmonic measure for $\partial\Omega$ with some fixed pole inside the domain. For the Laplacian, it is 
a classical fact that $\omega\in A_\infty$ is equivalent to solvability of the $L^p$ Dirichlet problem for some $p\in (1,\infty)$. 
Therefore, we shall assume that our domain $\Omega$ also satisfies the exterior Corkscrew condition. See also \cite{HMTo} for the variable coefficient version of this result.

\begin{definition}[\bf Chord-Arc domain]\label{def1cad}
If $\Omega$ and  $\mathbb R^n\setminus \overline{\Omega}$ satisfy the Corkscrew condition, 
$\Omega$ satisfies the Harnack Chain condition, and $\partial\Omega$ is is $n-1$-dimensional Ahlfors-David regular, then 
$\Omega$ is a chord-arc domain (CAD).
\end{definition}

We also note that on chord-arc domains there is a well defined notion of trace.  Let
$$W := \left\{u \in L^1_{loc}(\Omega):\, \|u\|_W := \left(\int_\Omega |\nabla u |^2 dx\right)^\frac12 < +\infty\right\},$$
which is clearly contained in $W_{loc}^{1,2}(\Omega)$. Then, if $\Omega$ is  CAD, there exists a bounded operator  $\Tr$ from $W$ to $L^2_{loc}(\partial\Omega,\sigma)$ such that $\Tr u = u\big|_{\partial\Omega}$ if $u\in W \cap C^0(\overline \Omega)$. Also the image $\Tr(W)$ is dense in $C^0(\overline \Omega)$.

Our notion of solvability of the Dirichlet problem requires a few more definitions.
In the first place, we need to introduce a non-standard notion of a nontangential approach region - such regions are typically referred to as
``cones" when the domain is at least Lipschitz regular, and ``corkscrew" regions when the domain is chord-arc. In the following, the parameter
$a$ is positive and will be referred to as the ``aperture". 
A standard corkscrew region associated with a boundary point $Q$ is defined (\cite{JK})  to be 
$$\gamma_a(Q)=\{x \in \Omega: |x-Q|< (1+a)\delta(x)\}$$
for some $a>0$ and nontangential maximal functions, square functions are defined in the literature with respect to these regions.
We modify this definition in order to achieve a certain geometric property (see Proposition \ref{Prop}) which may not hold 
for the $\gamma_a(Q)$ in general.

\begin{definition}\label{Cones}
For $y \in \Omega$, let $S_a(y) := \{Q \in \partial \Omega: y \in \gamma_a(Q)\}.$
Set 
$$\tilde{S}_a(y) := \bigcup_{Q \in S_a(y)} \Delta(Q, a\delta(y)).$$
Define 
$$\Gamma_a(Q) := \{ y \in \Omega: Q \in \tilde{S}_a(y)\}.$$
\end{definition}

Let us make some observations about this
novel definition of the corkscrew regions that we will use to 
define nontangential maximal functions.
We first note that, for any $Q \in \partial \Omega$,  $\gamma_a(Q) \subset \Gamma_a(Q)$. If $y \in \gamma_a(Q)$, then
$Q \in S_a(y) \subset \tilde{S}_a(y)$, i.e., $y \in \Gamma_a(Q)$. 
Next, we note that, for any $Q \in \partial \Omega$,  $\Gamma_a(Q) \subset \gamma_{2a}(Q)$. Indeed,
if $y \in \Gamma_a(Q)$, then $Q \in \tilde{S}_a(y)$ and therefore there exists a $Q_0 \in S_a(y)$ such that
$|Q-Q_0| < a\delta(y)$. Hence, 
$$|y-Q| \leq |y-Q_0| + |Q-Q_0| < (1+a) \delta(y) + a\delta(y) = (1+2a)\delta(y).$$
Thus our $\Gamma_a(Q)$ is sandwiched in between two standard corkscrew regions and is thus itself a 
corkscrew region.

\begin{definition}\label{D:NT} 
For $\Omega\subset\mathbb{R}^{n}$ as above, 
the nontangential maximal function $\tilde{N}_{p,a}$ is defined using $L^p$ averages over balls in the domain $\Omega$. 
Specifically, given $w\in L^p_{loc}(\Omega;{\BBC})$ we set
\begin{equation}\label{SSS-3}
\tilde{N}_{p,a}(w)(Q):=\sup_{x\in\Gamma_{a}(Q)}w_p(x)\,
\end{equation}
where, at each $x\in\Omega$, 
\begin{equation}\label{w}
w_p(x):=\left(\dint_{B_{\delta(x)/2}(x)}|w(z)|^{p}\,dz\right)^{1/p}.
\end{equation}
\end{definition}

The regions $\Gamma_a(Q)$ have the following property inherited from $\gamma_{2a}(Q)$: for any pair of points $x, x'$ in
$\Gamma_a(Q)$, there is a Harnack chain of balls connecting $x$ and $x'$ - see Definition \ref{def1.hc}. The centers of the balls
in this Harnack chain will be contained in a corkscrew region $\Gamma_{a'}(Q)$ of slightly larger aperture, where $a'$ depends only
the geometric constants in the definition of the domain.

\subsection{The $L^p$-Dirichlet problem}

We recall the definition of $L^p$ solvability of the Dirichlet problem for an operator 
 $\mathcal L = \partial_{i}\left(A_{ij}(x)\partial_{j}u\right) 
+B_{i}(x)\partial_{i}u $ where $A:= (A_{ij})$ is uniformly elliptic and bounded, and $|B_i(x)| \le K \delta(x)^{-1}$, for some small $K<\infty$ to be determined later.

In anticipation to formulating the $L^p$-Dirichlet problem we first recall the notion of classical solvability, 
via the Lax-Milgram lemma. Given a CAD-domain $\Omega$, consider the bilinear form 
$\mathcal B:{\dot{W}}^{1,2}(\Omega;\mathbb C)\times {\dot{W}}^{1,2}_0(\Omega;\mathbb C) \to\mathbb C$ defined by
\begin{equation}\label{eq-BF}
\mathcal B[u,w]=\int_{\Omega}\left[A_{ij}(x)\partial_ju(x)\partial_iw(x)
+B_i(x)\partial_iu(x) w(x)\right]\,dx.
\end{equation}
Clearly, $\mathcal B$ is bounded under the assumptions $A$ has entries in $L^\infty(\Omega)$ and that $B$ satisfies
$|B(x)|\le K\delta^{-1} (x)$. Indeed, for the second term this allows us to use the Cauchy-Schwarz inequality 
followed by an application of Hardy-Sobolev inequality
\begin{equation}\label{HSob}
\int_{\Omega}\frac{|w(x)|^2}{\delta(x)^2}dx\le C\int_\Omega |\nabla w|^2\,dx
\end{equation}
which holds for each function $w\in \dot{W}^{1,2}_0(\Omega;\mathbb C)$. Recalling our earlier discussion for $W={\dot{W}}^{1,2}(\Omega;\mathbb C)$ we denoted by $\Tr(W)$ traces of functions from $W$ on $\Omega$ and noted that  $\Tr(W)$ is dense in $C^0(\overline{\Omega})$.

Given an arbitrary $f\in\Tr(W)$,  
there exists $v\in \dot{W}^{1,2}(\Omega;\mathbb C)$ such that ${\rm Tr}\,v=f$ on $\partial\Omega$. 
Writing $u=u_0+v$, we seek $u_0\in \dot{W}^{1,2}_0(\Omega;\mathbb C)$ such that
$$
\mathcal B[u_0,w]=-B[v,w]\,\,\,\mbox{ for all }\,\,w\in \dot{W}^{1,2}_0(\Omega;\mathbb C).
$$
Observe that $-B[v,\cdot]\in\big(\dot{W}^{1,2}_0(\Omega;\mathbb C)\big)^*$, hence by the Lax-Milgram lemma 
there exists unique solution $u_0\in \dot{W}^{1,2}_0(\Omega;\mathbb C)$, provided the form $\mathcal B$ 
is coercive on the space $\dot{W}^{1,2}_0(\Omega;\mathbb C)$.

When $\mathcal L$ is uniformly elliptic (i.e. $2$-elliptic) we clearly have
$$
Re\,\int_{\Omega}A_{ij}\partial_ju\overline{\partial_iu}\,dx\ge \lambda \int_\Omega|\nabla u|^2\,dx,
$$
for all $u\in \dot{W}^{1,2}_0(\Omega;\mathbb C)$. On the other hand, for the term involving the entries of 
$B$ we may use \eqref{HSob} to estimate 
$$
\left|\int_{\Omega}B_i(\partial_iu) \overline{u}\,dx \right|\le CK\int_\Omega|\nabla u|^2\,dx,
$$
hence
$$
{\mathcal B}[u,\overline{u}]\ge (\lambda-CK)\|\nabla u\|^2_{L^2(\Omega)}.
$$
This implies coercivity of the bilinear form $\mathcal B$, for small values of $K$. It follows that 
the Lax-Milgram lemma can be applied and guarantees the existence of weak solutions.
That is, 
given any $f\in \Tr(W)$, the homogenous space of traces of functions in $W$, there exists a unique 
$u \in W$ (up to a constant) such that $\mathcal{L}u=0$ in $\Omega$ and ${\rm Tr}\,u=f$ on $\partial\Omega$. We call these solutions {\it energy solutions} and use them to define the notion of solvability of the $L^p$ Dirichlet problem.

\begin{definition}\label{D:Dirichlet} 
Let $\Omega$ be a chord-arc domain in $\mathbb R^n$
and fix an integrability exponent 
$p\in(1,\infty)$. Also, fix an aperture parameter $a>0$. Consider the following Dirichlet problem 
for a complex valued function $u:\Omega\to{\BBC}$:
\begin{equation}\label{E:D}
\begin{cases}
0=\partial_{i}\left(A_{ij}(x)\partial_{j}u\right) 
+B_{i}(x)\partial_{i}u 
& \mbox{in } \Omega,
\\[4pt]
u(Q)=f(Q) & \mbox{ for $\sigma$-a.e. }\,Q\in\partial\Omega, 
\\[4pt]
\tilde{N}_{2,a}(u) \in L^{p}(\partial \Omega), &
\end{cases}
\end{equation}
where the usual Einstein summation convention over repeated indices ($i,j$ in this case) 
is employed. 

The Dirichlet problem \eqref{E:D} is solvable for a given $p\in(1,\infty)$ if  there exists a
$C=C(p,\Omega)>0$ such that
for all boundary data
$f\in L^p(\partial\Omega;{\BBC})\cap \Tr(W)$ the unique energy solution
satisfies the estimate
\begin{equation}\label{y7tGV}
\|\tilde{N}_{2,a} (u)\|_{L^{p}(\partial\Omega; d\sigma)}\leq C\|f\|_{L^{p}(\partial\Omega;d\sigma)},
\end{equation}
where $d\sigma$ denotes surface measure on the boundary, i.e., the restriction of $H^{n-1}$ to $\partial \Omega$.
\end{definition}

Above and elsewhere, a barred integral indicates an averaging operation. Observe that, given $w\in L^p_{loc}(\Omega;{\BBC})$, the function $w_p$ 
associated with $w$ as in \eqref{w} is continuous.
The $L^2$-averaged nontangential maximal function was introduced in \cite{KP2} in connection with
the Neuman and regularity problems. In the context of $p$-ellipticity, Proposition 3.5 of \cite{DPcplx} shows that there is no difference between 
$L^2$ averages and $L^p$ averages when $w=u$ solves $\mathcal Lu=0$ and that $\tilde{N}_{p,a}(u)$ and $\tilde{N}_{2,a'}(u)$ are comparable in $L^r$ norms for all $r>0$ and all allowable apertures $a,a'$.

\noindent{\it Remark.}  Given $f\in L^p(\partial\Omega;{\BBC})\cap\Tr(W)$,
the corresponding energy solution constructed above is unique: the decay implied by the $L^p$ estimates eliminates constant solutions. As the space 
$L^p(\partial\Omega;{\BBC})\cap\Tr(W)$ is dense in $C^0(\overline{W})$ and hence in
$L^p(\partial\Omega;{\BBC})$ for each $p\in(1,\infty)$, it follows that there exists a 
unique continuous extension of the solution operator
$f\mapsto u$
to the whole space $L^p(\partial\Omega;{\BBC})$, with $u$ such that $\tilde{N}_{2,a} (u)\in L^p(\partial\Omega)$ 
and, moreover, $\|\tilde{N}_{2,a} (u) \|_{L^{p}(\partial \Omega)} 
\leq C\|f\|_{L^{p}(\partial\Omega;{\BBC})}$. It was shown in the Appendix (section 7)  of \cite{DPcplx} that 
for any $f\in L^p(\partial \Omega;\mathbb C)$ the corresponding solution $u$ constructed by the continuous extension attains the datum $f$ as its boundary values in the following sense.
Consider the average $\tilde u:\Omega\to \mathbb C$ defined by
$$\tilde{u}(x)=\dint_{B_{\delta(x)/2}(x)} u(y)\,dy,\quad \forall x\in \Omega.$$
Then 
\begin{equation}
f(Q)=\lim_{x\to Q,\,x\in\Gamma(Q)}\tilde u(x),\qquad\mbox{for a.e. }Q\in\partial\Omega,
\end{equation}
where the a.e. convergence is taken with respect to the ${\mathcal H}^{n-1}$ Hausdorff measure on $\partial\Omega$.

\medskip

In \cite{DPcplx}, it was shown that a Moser iteration scheme could be applied in the presence of $p$-ellipticity to yield higher regularity of 
solutions. Precisely, the following two lemmas were proven.

\begin{lemma}\label{LpAvebig}
Let the matrix $A$ be $p$-elliptic for $p \geq 2$ and let $B$ have coefficients satisfying $B_i(x) \leq K\delta(x)^{-1}$.
Suppose that $u$ is a $W^{1,2}_{loc}(\Omega;\BBC)$ solution to $\mathcal L$ in $\Omega$. Then, for any ball $B_r(x)$ with $r < \delta(x)/4$,
\begin{equation}\label{psq}
\int_{B_r(x)} |\nabla u(y)|^2 |u(y)|^{p-2} dy \lesssim r^{-2} \int_{B_{2r}(x))} |u(y)|^p dy
\end{equation}
and
\begin{equation}\label{pAve-old}
\left(\dint_{B_{r}(x))} |u(y)|^q dy\right)^{1/q}
\lesssim  \left(\dint_{B_{2r}(x)} |u(y)|^2 dy\right)^{1/2}
\end{equation}
for all $q \in (2,\frac{np}{n-2}]$ when $n > 2$, and
where the implied constants depend only $p$-ellipticity and $K$.
When $n=2$, $q$ can be any number in $(2,\infty)$.
In particular, $|u|^{(p-2)/2} u$ belongs to $W^{1,2}_{loc}(\Omega;\BBC).$
\end{lemma}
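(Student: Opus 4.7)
\textbf{Proof proposal for Lemma \ref{LpAvebig}.}

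The plan is the standard Moser-type argument adapted to complex solutions by means of the $p$-ellipticity inequality \eqref{pEll}.
First I would establish the weighted Caccioppoli estimate \eqref{psq}. Fix $x\in\Omega$ and a cutoff $\eta\in C_c^\infty(B_{2r}(x))$ with $\eta\equiv 1$ on $B_r(x)$, $|\nabla\eta|\lesssim r^{-1}$, where $r<\delta(x)/4$. One would like to test the weak formulation of $\mathcal Lu=0$ against $\phi=\eta^2 |u|^{p-2}\bar u$; since $u$ may vanish, this is made rigorous by regularizing $|u|^{p-2}$ as $(|u|^2+\varepsilon)^{(p-2)/2}$ and letting $\varepsilon\to 0$. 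Expanding the product rule gives
\[
\partial_i\phi = \eta^2\bigl[|u|^{p-2}\partial_i\bar u+(p-2)|u|^{p-4}\mathscr{R}e(\bar u\,\partial_i u)\,\bar u\bigr]+2\eta\,\partial_i\eta\,|u|^{p-2}\bar u.
\]
The crucial pointwise identity, proved by decomposing $u=\alpha+i\beta$ and using $\mathscr{R}e(\bar u\,\partial_i u)=\tfrac12\partial_i|u|^2$, identifies
\[
\mathscr{R}e\bigl(A_{ij}\partial_j u\bigl[|u|^{p-2}\partial_i\bar u+(p-2)|u|^{p-4}\mathscr{R}e(\bar u\,\partial_i u)\,\bar u\bigr]\bigr)
=p\,|u|^{p-2}\,\mathscr{R}e\langle A\nabla u,\cJ_p\nabla u\rangle,
\]
up to an irrelevant scalar, so that \eqref{pEll} yields the coercive bound $\ge p\lambda_p|u|^{p-2}|\nabla u|^2$.

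Taking real parts of the weak formulation and using this coercivity gives
\[
p\lambda_p\int\eta^2|u|^{p-2}|\nabla u|^2\,dy \le C\int|\nabla\eta|\,\eta\,|u|^{p-1}|\nabla u|\,dy+\Bigl|\int B_i\,\partial_i u\,\phi\,dy\Bigr|.
\]
The first term on the right is controlled by Cauchy–Schwarz and Young's inequality ($ab\le\epsilon a^2+C_\epsilon b^2$) so that half of the coercive quantity can be absorbed, leaving a tail $C\int|\nabla\eta|^2|u|^p\lesssim r^{-2}\int_{B_{2r}}|u|^p$. For the drift term, since $r<\delta(x)/4$ we have $\delta(y)\sim\delta(x)\sim r$ on the support of $\eta$, so $|B_i|\le K\delta^{-1}\lesssim K r^{-1}$; Cauchy–Schwarz bounds it by $CK(\int\eta^2|u|^{p-2}|\nabla u|^2)^{1/2}(\int\eta^2|u|^p/\delta^2)^{1/2}$, which by another Young's inequality can be absorbed provided $K$ is sufficiently small (depending only on $\lambda_p$), leaving again an $r^{-2}\int_{B_{2r}}|u|^p$ tail. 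This proves \eqref{psq}, and in particular shows that $v:=|u|^{(p-2)/2}u$ has $|\nabla v|^2\lesssim|u|^{p-2}|\nabla u|^2\in L^1_{loc}$, so $v\in W^{1,2}_{loc}(\Omega;\BBC)$.

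For \eqref{pAve-old} I would bootstrap via Sobolev embedding. Given any $s\in[2,p]$, the matrix $A$ is $s$-elliptic (because $p$-ellipticity is preserved on the interval $\{q:|1-2/q|<\mu(A)\}$, which is symmetric around $2$ and contains $p$), so the same argument yields the weighted Caccioppoli inequality with exponent $s$. Combined with the Sobolev inequality applied to $v=|u|^{(s-2)/2}u$, this produces the reverse-Hölder gain
\[
\Bigl(\dint_{B_r}|u|^{sn/(n-2)}\Bigr)^{(n-2)/(sn)}\lesssim \Bigl(\dint_{B_{2r}}|u|^s\Bigr)^{1/s},
\]
valid for $n>2$ (and with $sn/(n-2)$ replaced by any finite exponent when $n=2$). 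Starting from $s=2$ and iterating finitely many times along the sequence $s_k=2\bigl(\tfrac{n}{n-2}\bigr)^k$ until $s_k$ first exceeds $p$ (and then performing one last step at $s=p$), and adjusting the radii geometrically so that the product of constants remains bounded, one obtains \eqref{pAve-old} for $q=np/(n-2)$; the intermediate values $q\in(2,np/(n-2))$ follow by Hölder's inequality.

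The main obstacle in the above is the algebraic identity that links the real part of $\langle A\nabla u,\nabla(|u|^{p-2}\bar u)\rangle$ to the $p$-ellipticity quadratic form $\mathscr{R}e\langle A\nabla u,\cJ_p\nabla u\rangle$; once this is in hand, the rest is a careful but standard absorption and Moser iteration, with the drift term handled by the smallness of $K$. The regularization in $\varepsilon$ is a technical point to ensure test functions lie in the correct Sobolev space when $u$ may vanish.
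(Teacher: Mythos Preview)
The paper does not give its own proof of this lemma; it is quoted from \cite{DPcplx}, where the argument is precisely the Moser-type scheme you outline: test the equation against $\eta^2|u|^{p-2}\bar u$, use $p$-ellipticity to obtain the weighted Caccioppoli inequality \eqref{psq}, then apply Sobolev embedding to $v=|u|^{(p-2)/2}u$ and iterate through the range of $s$-ellipticity to reach the exponent $np/(n-2)$. Your treatment of the drift term (smallness of $K$ absorbed into the coercive part, using $\delta(y)\gtrsim r$ on $B_{2r}(x)$) is also the same as in the cited reference.

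One point to tighten: the pointwise identity you state is not literally correct as written. If one writes $u=|u|e^{i\theta}$ and sets $\zeta=e^{-i\theta}\nabla u=\nabla|u|+i|u|\nabla\theta$, a direct computation gives
\[
\mathscr{R}e\bigl[A_{ij}\partial_j u\,\partial_i(|u|^{p-2}\bar u)\bigr]
= p\,|u|^{p-2}\,\mathscr{R}e\langle A\zeta,\cJ_{p'}\zeta\rangle,
\]
i.e.\ the map $\cJ_{p'}$ acts on $\zeta$, not $\cJ_p$ on $\nabla u$. Since $p$-ellipticity and $p'$-ellipticity are equivalent (the condition $|1-2/q|<\mu(A)$ is symmetric under $q\leftrightarrow q'$) and $|\zeta|=|\nabla u|$, the coercive lower bound $\gtrsim \lambda\,|u|^{p-2}|\nabla u|^2$ follows exactly as you claim, so this does not affect the rest of your argument.
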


\begin{lemma}\label{LpAve}
Let the matrix $A$ be $p$-elliptic for $p < 2$ and let $B$ have coefficients satisfying $B_i(x) \leq K \delta(x)^{-1}$.
Suppose that $u$ is a $W^{1,2}_{loc}(\Omega;\BBC)$ solution to $\mathcal L$ in $\Omega$. Then, for any ball $B_r(x)$ with $r < \delta(x)/4$ and any $\varepsilon>0$
\begin{equation}\label{psq2}
r^2\dint_{B_r(x)} |\nabla u(y)|^2 |u(y)|^{p-2} dy \le C_\varepsilon \dint_{B_{2r}(x)} |u(y)|^p dy+\varepsilon\left( \dint_{B_{2r}(x)} |u(y)|^2 dy\right)^{p/2}
\end{equation}
and
\begin{equation}\label{pAve2}
\left(\dint_{B_{r}(x)} |u(y)|^2 dy\right)^{1/2}
\le  C_\varepsilon\left(\dint_{B_{2r}(x)} |u(y)|^p dy\right)^{1/p}+\varepsilon \left(\dint_{B_{2r}(x)} |u(y)|^2 dy\right)^{1/2}
\end{equation}
where the constants depend only $p$-ellipticity and $K$.
In particular, $|u|^{(p-2)/2} u$ belongs to $W^{1,2}_{loc}(\Omega;\BBC).$
\end{lemma}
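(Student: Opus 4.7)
The plan is to mirror the Moser-iteration scheme already used for Lemma \ref{LpAvebig}, with $\delta$-regularization and the extra $\varepsilon$ terms being forced by the fact that when $p<2$ the weight $|u|^{p-2}$ arising from the natural test function is singular on the zero set of $u$. Both \eqref{psq2} and \eqref{pAve2} will fall out of the same weak-formulation calculation, the second one via Sobolev embedding applied to the substituted function $v:=|u|^{(p-2)/2}u$.

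First I would fix a cutoff $\eta\in C_c^\infty(B_{2r}(x))$ with $\eta\equiv 1$ on $B_r(x)$ and $|\nabla\eta|\lesssim 1/r$, introduce the regularization $G_\delta(u):=(|u|^2+\delta)^{(p-2)/2}u$ for a parameter $\delta>0$, and test the weak formulation of $\mathcal Lu=0$ against $\varphi:=\eta^2\,\overline{G_\delta(u)}$ (or, following \cite{CD}, its $\cJ_p$-twisted version, which is the natural pairing for \eqref{pEll}). Expanding $\nabla\varphi$ by the chain and product rules produces three pieces to estimate. The principal one, $\mathscr{R}e\int\eta^2\langle A\nabla u,\nabla\overline{G_\delta(u)}\rangle$, is bounded below by the pointwise $p$-ellipticity inequality \eqref{pEll} together with the algebraic identity of \cite{CD,DPcplx} by $\lambda_p'\int\eta^2(|u|^2+\delta)^{(p-2)/2}|\nabla u|^2$, for some $\lambda_p'>0$ depending only on $\lambda_p$ and $\Lambda$. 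The cross term in $\nabla\eta$ is estimated by Cauchy--Schwarz and Young's inequality, producing an error of order $r^{-2}\int_{B_{2r}(x)}(|u|^2+\delta)^{p/2}$ plus a small multiple of the principal term. The drift term $\int B_i(\partial_i u)\,\overline{G_\delta(u)}\,\eta^2$, using $|B|\le K\delta(x)^{-1}\lesssim K/r$ on the support of $\eta$ and Cauchy--Schwarz, is dominated by $CK\int\eta^2(|u|^2+\delta)^{(p-2)/2}|\nabla u|^2$, which is absorbed into the principal term provided $K$ is sufficiently small relative to $\lambda_p'$.

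Next I let $\delta\downarrow 0$. On $\{u\neq 0\}$ the principal integrand converges to $|u|^{p-2}|\nabla u|^2$, and $\{u=0\}$ contributes nothing since $\nabla u=0$ a.e.\ there. The new feature specific to the range $p<2$ is the control of the cross integrand $(|u|^2+\delta)^{p/2}$: since $0<p/2<1$ one only has the subadditive bound $(|u|^2+\delta)^{p/2}\le |u|^p+\delta^{p/2}$, and the residual $\delta^{p/2}$ term does not vanish uniformly after integration. To close the estimate I would apply Young's inequality with parameter $\varepsilon$ to split $(|u|^2+\delta)^{p/2}\le C_\varepsilon |u|^p+\varepsilon(|u|^2+\delta)$; the second piece, after integration, normalization by the ball volume, and an application of Jensen's inequality (using $p<2$), yields exactly the term $\varepsilon\big(\dint_{B_{2r}(x)}|u(y)|^2\,dy\big)^{p/2}$ on the right-hand side of \eqref{psq2}.

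For \eqref{pAve2} I would combine \eqref{psq2} with the $L^2$ Sobolev inequality applied to $v:=|u|^{(p-2)/2}u$, which satisfies $|v|^2=|u|^p$ and $|\nabla v|^2\lesssim|u|^{p-2}|\nabla u|^2$; \eqref{psq2} then controls $\|\nabla v\|_{L^2(B_r)}^2$, and Sobolev lifts the integrability of $v$, i.e.\ of $|u|^{p/2}$. A finite number of iterations on dyadic sub-balls pushes the exponent above $2$, and interpolation back produces \eqref{pAve2}, with the $\varepsilon$ correction inherited from \eqref{psq2}. The principal obstacle throughout is the control of the cross term in the limit $\delta\downarrow 0$: the $\varepsilon(\dint|u|^2)^{p/2}$ correction is genuine and not an artifact, because the naive bound on $(|u|^2+\delta)^{p/2}$ does not yield uniform control by $|u|^p$ alone, and negotiating this while preserving the small-$K$ absorption of the drift term is the heart of the argument.
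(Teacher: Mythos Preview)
The paper does not give its own proof of this lemma; it is quoted from \cite{DPcplx}, so there is no in-paper argument to compare against. Your overall strategy---regularize with $G_\delta(u)=(|u|^2+\delta)^{(p-2)/2}u$, test the weak equation against $\eta^2\overline{G_\delta(u)}$, use $p$-ellipticity for the principal term, absorb the drift, then pass $\delta\downarrow 0$ and apply Sobolev to $v=|u|^{(p-2)/2}u$---is the right one and is essentially what is done in \cite{DPcplx}.

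However, your account of \emph{why} the $\varepsilon$-term appears is incorrect on two points. First, the claim that the residual $\delta^{p/2}$ ``does not vanish uniformly after integration'' is false: $\int_{B_{2r}}\delta^{p/2}\,dy=|B_{2r}|\,\delta^{p/2}\to 0$ as $\delta\downarrow 0$, so the subadditive bound $(|u|^2+\delta)^{p/2}\le |u|^p+\delta^{p/2}$ already gives $\int(|u|^2+\delta)^{p/2}\to\int|u|^p$ with no leftover. Second, the Young-type inequality you propose, $(|u|^2+\delta)^{p/2}\le C_\varepsilon|u|^p+\varepsilon(|u|^2+\delta)$, is simply false: at $u=0$ it reads $\delta^{p/2}\le \varepsilon\delta$, which fails for small $\delta$ since $p/2<1$.

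In fact these errors are \emph{harmless} in the sense that, if you run your own argument correctly, you obtain the regularized Caccioppoli inequality with right-hand side $r^{-2}\int(|u|^2+\delta)^{p/2}$, and letting $\delta\downarrow 0$ (Fatou on the left, dominated convergence on the right) gives \eqref{psq2} \emph{without} the $\varepsilon$-correction; Sobolev iteration then gives \eqref{pAve2} also without it. This is exactly the improvement the paper attributes to \cite{FMZ} immediately after stating the lemma. So the $\varepsilon$-term in the \cite{DPcplx} formulation is an artifact of their original argument rather than an intrinsic obstruction, and your attempt to manufacture a mechanism for it is where your write-up goes astray. (One genuine technical point you do need to justify carefully is that the regularized principal term still enjoys a \emph{uniform} lower bound $c\,(|u|^2+\delta)^{(p-2)/2}|\nabla u|^2$; this follows because the effective ellipticity exponent interpolates between $2$ and $p$ as $|u|^2/(|u|^2+\delta)$ ranges over $[0,1]$, and $A$ is $q$-elliptic for every $q$ in that interval.)
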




In \cite{FMZ}, two improvements were observed. First, the reverse H\"older inequality for $p<2$ was simplified, eliminating the term
containing the
integral of $|u|^2$ multiplied by $\varepsilon$ on the left hand side of  \eqref{pAve2}. Second, the method of proof led to an
extension of the reverse H\"older inequalities to the boundary, namely
for balls $B$ for which the $\mbox{Tr}(u) = 0$ on $2B \cap \partial \Omega$. The statement of the boundary reverse H\"older is as follows.

\begin{lemma}(\cite{FMZ}) \label{BRH-old}
Let $\Omega$ be a chord-arc domain. Let $\mathcal L = \partial_{i}\left(A_{ij}(x)\partial_{j}u\right) $ be a $q$-elliptic operator.
Let $u\in W$ be a weak solution to $\mathcal Lu = 0$ in $\Omega$ and  $B$ be a ball of radius $r$ centered on $\partial \Omega$ such that $\Tr u = 0$ on $2B \cap \partial \Omega$. There holds
\[\int_{B \cap \Omega} |u|^{q-2} |\nabla u|^2 \, dx  \leq \frac{C}{r^2} \int_{(2B \setminus B)\cap \Omega} |u|^q \, dx.\]
Furthermore, if $q>2$, one has
\[\left( \frac{1}{|B \cap \Omega|} \int_{B\cap \Omega}  |u|^q \, dx \right)^{\frac{1}{q}} \leq C  \left(\frac{1}{|2B \cap \Omega|} \int_{2B \cap \Omega}  |u|^2   \, dx \right)^{\frac{1}{2}},\]
and if $q<2$, we have
\[\left(\frac{1}{|B \cap \Omega|} \int_{B\cap \Omega}  |u|^2 \, dx \right)^{\frac{1}{2}} \leq C  \left(\frac{1}{|2B \cap \Omega|} \int_{2B \cap \Omega}  |u|^q   \, dx\right)^{\frac{1}{q}}.\]
The constant $C>0$ depends only on $n$, $q$, the constant $\lambda_q$ and $\|A\|_\infty$.
\end{lemma}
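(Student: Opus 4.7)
The plan is to first derive the Caccioppoli estimate by testing the weak formulation against a suitable function, then obtain both reverse Hölder inequalities via Sobolev embedding and a Moser-type iteration. Throughout, the hypothesis $\Tr u=0$ on $2B\cap\partial\Omega$ is what ensures the relevant test functions have vanishing trace on all of $\partial\Omega$.

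\textbf{Caccioppoli inequality.} I would test the weak formulation $\int_{\Omega} A_{ij}\partial_j u\,\partial_i w\,dx=0$ against $w:=\phi^{2}\bar u|u|^{q-2}$, where $\phi$ is a Lipschitz cutoff with $\phi\equiv 1$ on $B$, $\text{supp}\,\phi\subset 2B$, and $|\nabla\phi|\lesssim 1/r$; when $q<2$ I would replace $|u|^{q-2}$ by the regularization $(|u|^{2}+\epsilon)^{(q-2)/2}$ and let $\epsilon\to 0$ at the end. Since $\Tr u=0$ on $2B\cap\partial\Omega$ and $\phi$ is supported in $2B$, the product $w$ has vanishing trace on $\partial\Omega$; by Lemmas \ref{LpAvebig}--\ref{LpAve} it lies in $\dot W^{1,2}_{0}(\Omega;\BBC)$, hence is admissible. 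Using the identity
\[
\partial_{i}(\bar u|u|^{q-2})=\tfrac{q}{2}|u|^{q-2}\partial_{i}\bar u+\tfrac{q-2}{2}|u|^{q-4}\bar u^{2}\partial_{i}u,
\]
expanding $\partial_i w$, taking real parts, and invoking the $q$-ellipticity \eqref{pEll} applied to $\xi:=|u|^{(q-2)/2}\nabla u$, the main term quadratic in $\nabla u$ is bounded below by $c\lambda_{q}\int\phi^{2}|u|^{q-2}|\nabla u|^{2}$, while the cutoff term involving $\nabla\phi$ is absorbed via Cauchy-Schwarz. This gives the first claimed inequality.

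\textbf{Reverse Hölder.} Set $v:=|u|^{q/2}$; by the same lemmas, $v\in W^{1,2}_{loc}(\Omega)$ with $|\nabla v|^{2}\lesssim |u|^{q-2}|\nabla u|^{2}$, and $\Tr v=0$ on $2B\cap\partial\Omega$. Picking $\psi$ as above, $v\psi$ vanishes on $\partial(2B\cap\Omega)$; extending by zero to $\BBR^{n}$ and applying the standard Sobolev embedding, together with the Caccioppoli inequality of the previous step and the ADR estimate $|B\cap\Omega|\approx r^{n}$, one obtains the self-improvement
\[
\Bigl(\dint_{B\cap\Omega}|u|^{\frac{pn}{n-2}}\Bigr)^{\frac{n-2}{np}}\;\lesssim\;\Bigl(\dint_{2B\cap\Omega}|u|^{p}\Bigr)^{1/p}
\]
for every $p$ in the $p$-elliptic range of $A$. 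When $q>2$, I would iterate along $p_{k}=2\bigl(\tfrac{n}{n-2}\bigr)^{k}$ from $p_{0}=2$ on a sequence of nested balls between $B$ and $2B$, stopping and Hölder-interpolating at the first $p_{k}\ge q$; every intermediate $p_{k}\in[2,q]$ is $p_k$-elliptic because, by the Carbonaro-Dragi\v{c}evi\'c characterization $|1-2/p|<\mu(A)$ from \cite{CD}, the $p$-elliptic range is a symmetric interval around $p=2$ that contains $[2,q]$. When $q<2$, I would instead start at $p_{0}=q$, iterate upward, and Hölder-interpolate at the first $p_{k}\ge 2$; each $p_{j}\in[q,2]$ lies in the $p$-elliptic range by the same characterization.

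\textbf{Main obstacle.} The most delicate point is the case $q<2$: the singular weight $|u|^{q-2}$ demands the regularization $(|u|^{2}+\epsilon)^{(q-2)/2}$ with $\epsilon$-uniform estimates followed by careful passage to the limit. Unlike the interior bound \eqref{pAve2}, the boundary version has no $\varepsilon\|u\|_{L^{2}}^{p}$ residual: the FMZ improvement exploits $\Tr u=0$ on $2B\cap\partial\Omega$ together with the Hardy-Sobolev inequality \eqref{HSob} to absorb the $L^{2}$-term produced by regularization into the gradient term, using that $\delta(x)\lesssim r$ throughout $2B\cap\Omega$. Establishing this absorption uniformly in $\epsilon$ is the technical heart of the argument.
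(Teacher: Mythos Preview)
The paper does not prove this lemma; it is quoted from \cite{FMZ}, and the text that follows only explains how to upgrade it to Lemma~\ref{BRH}. So there is no in-paper argument to compare against. Your overall outline---test against $\phi^{2}\bar u|u|^{q-2}$, use $q$-ellipticity for the coercive lower bound, then Sobolev plus Moser iteration on nested balls---is the standard route and is essentially what one would expect the \cite{FMZ} proof to look like.

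Your ``main obstacle'' paragraph, however, misidentifies the mechanism behind the $q<2$ improvement. You attribute the elimination of the residual $\varepsilon\bigl(\dint|u|^{2}\bigr)^{p/2}$ term to the boundary hypothesis $\Tr u=0$ on $2B\cap\partial\Omega$ combined with the Hardy--Sobolev inequality \eqref{HSob} and the bound $\delta(x)\lesssim r$ on $2B\cap\Omega$. But the paper states explicitly that the first of the two \cite{FMZ} improvements is the removal of this residual term already in the \emph{interior} estimate \eqref{pAve2}, where no boundary trace condition is available and $\delta(x)$ is comparable to $r$ in both directions. Whatever \cite{FMZ} does to kill that term must therefore be an improvement internal to the regularization or the algebra of the test function, not an absorption that relies on vanishing trace; the boundary extension is described as a \emph{second}, separate improvement that follows once the cleaner interior argument is in hand. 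Your proposed absorption via \eqref{HSob} would simply not apply to balls $B_r(x)$ with $r<\delta(x)/4$, so it cannot be the actual argument.
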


We claim the following improvement of this lemma holds.

\begin{lemma} \label{BRH}
Let $\Omega$ be a chord-arc domain. Let $\mathcal L = \partial_{i}\left(A_{ij}(x)\partial_{j}u\right) +B_{i}(x)\partial_{i}$ be a $q$-elliptic operator. There exists $K=K(n,q,\lambda_q,\|A\|_\infty)>0$ of the following significance. Suppose that $|B_i(x)| \le K\delta(x)^{-1}$ for all $x\in\Omega$. Let $u\in W$ be a weak solution to $\mathcal Lu = 0$ in $\Omega$ and  $B$ be a ball of radius $r$ centered on $\partial \Omega$ such that $\Tr u = 0$ on $2B \cap \partial \Omega$. There holds
\[\int_{B \cap \Omega} |u|^{q-2} |\nabla u|^2 \, dx  \leq \frac{C}{r^2} \int_{(2B \setminus B)\cap \Omega} |u|^q \, dx.\]
Furthermore, one has
\begin{equation}\label{eq-rh}
\left( \frac{1}{|B \cap \Omega|} \int_{B\cap \Omega}  |u|^q \, dx \right)^{\frac{1}{q}} \leq C  \left(\frac{1}{|2B \cap \Omega|} \int_{2B \cap \Omega}  |u|^p   \, dx \right)^{\frac{1}{p}}
\end{equation}
for any $p> 0$. The constant $C>0$ depends only on $n$, $q$, the constant $\lambda_q$ and $\|A\|_\infty$.
\end{lemma}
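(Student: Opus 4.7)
I would prove Lemma~\ref{BRH} in three stages: (i) establish the Caccioppoli-type inequality in the presence of the drift term $B_i\partial_i u$; (ii) combine the Caccioppoli with Sobolev embedding to obtain the reverse H\"older inequality \eqref{eq-rh} with some fixed initial exponent $p_0>0$ on the right, as in \cite{FMZ}; (iii) self-improve the right-hand side exponent to every $p>0$ by first iterating to a sup bound and then lowering $p_0$ through a Campanato-style iteration.

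For the Caccioppoli step, I would test the weak formulation of $\mathcal L u = 0$ against $\varphi = |u|^{q-2}\bar u\eta^2$, where $\eta$ is a smooth cutoff equal to $1$ on $B$, supported in $2B$, with $|\nabla\eta|\lesssim 1/r$. Admissibility $\Tr\varphi = 0$ is precisely the content of the hypothesis $\Tr u = 0$ on $2B\cap\partial\Omega$. The principal part is treated exactly as in \cite{DPcplx,FMZ}: $q$-ellipticity bounds $\mathscr{R}e\int A_{ij}\partial_j u\,\partial_i\varphi$ from below by $\lambda_q\int|u|^{q-2}|\nabla u|^2\eta^2 - C\int|u|^q|\nabla\eta|^2$. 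The genuinely new term is the drift contribution $\int B_i\partial_i u\cdot|u|^{q-2}\bar u\eta^2$. Using $|B|\le K\delta^{-1}$ and Cauchy--Schwarz, it is dominated by
\[
CK\Bigl(\int\delta^{-2}|u|^q\eta^2\Bigr)^{1/2}\Bigl(\int|u|^{q-2}|\nabla u|^2\eta^2\Bigr)^{1/2}.
\]
The Hardy inequality \eqref{HSob} applied to $|u|^{q/2}\eta$, which belongs to $\dot W^{1,2}_0(\Omega;\BBC)$ precisely because of the vanishing trace, controls $\int\delta^{-2}|u|^q\eta^2$ by $\int|u|^{q-2}|\nabla u|^2\eta^2 + \int|u|^q|\nabla\eta|^2$ up to constants. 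Choosing $K$ small enough in terms of $\lambda_q,\|A\|_\infty,n,q$ lets the offending term $\int|u|^{q-2}|\nabla u|^2\eta^2$ be absorbed on the left, producing the stated Caccioppoli inequality; this is where the smallness threshold $K(n,q,\lambda_q,\|A\|_\infty)$ is fixed. Feeding the Caccioppoli into Sobolev embedding applied to $w:=|u|^{q/2}\eta$ (extended by zero to $\BBR^n$) then yields \eqref{eq-rh} with some specific starting exponent $p_0>0$, exactly along the lines of \cite{FMZ}.

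To upgrade \eqref{eq-rh} to arbitrary $p>0$, I would invoke a standard self-improvement argument. The reverse H\"older with exponent $p_0$ holds on every sub-ball $B'=B(Q',r')\subset B$ with $Q'\in\partial\Omega$, and the interior Moser estimate (obtained by iterating Lemma~\ref{LpAvebig}/Lemma~\ref{LpAve}) handles sub-balls disjoint from $\partial\Omega$. Iterating these two estimates on shrinking Whitney/corkscrew balls inside $B\cap\Omega$ produces a sup bound
\[
\sup_{x\in\tfrac12 B\cap\Omega}|u(x)|\le C\Bigl(\dint_{2B\cap\Omega}|u|^{p_0}\Bigr)^{1/p_0}.
\]
The elementary inequality $\int|u|^{p_0}\le\|u\|_{L^\infty}^{p_0-p}\int|u|^p$, combined with Campanato's classical iteration lemma, then lowers $p_0$ to any $p\in(0,p_0)$ at the cost of a constant $C_p$ which may blow up as $p\to 0^+$. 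Integrating the resulting sup bound over $B\cap\Omega$ yields \eqref{eq-rh} for every $p>0$.

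\textbf{Main obstacle.} The delicate new point is the absorption step in the Caccioppoli: it succeeds only because $|u|^{q/2}\eta$ has zero trace on $\partial\Omega$, so that Hardy's inequality is available, and only once $K$ is quantitatively small in terms of the $q$-ellipticity and ellipticity constants -- this is precisely what pins down the smallness condition in the lemma. The remaining ingredients (Sobolev embedding, iteration to an $L^\infty$ bound, and Campanato-style self-improvement) are standard once the Caccioppoli is in place, but care is required in the iteration to the boundary, which is handled by combining the boundary reverse H\"older on surface sub-balls with the interior Moser estimate on the corkscrew regions introduced in Definition~\ref{Cones}.
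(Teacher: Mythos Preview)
Your stages (i) and (ii) are fine and line up with what the paper does: the Caccioppoli with drift is handled exactly by the test function $|u|^{q-2}\bar u\,\eta^2$, the Hardy inequality \eqref{HSob} applied to $|u|^{q/2}\eta$, and absorption for $K$ small; this is precisely the ``examining the proof of \cite{FMZ}'' step.

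The genuine gap is in stage (iii). You propose to iterate the interior estimates of Lemmas~\ref{LpAvebig}--\ref{LpAve} together with the boundary version to obtain a pointwise $L^\infty$ bound on $\tfrac12 B\cap\Omega$, and then lower the exponent via $\int|u|^{p_0}\le\|u\|_{L^\infty}^{p_0-p}\int|u|^{p}$. But in the complex-coefficient setting there is \emph{no} $L^\infty$ bound available: the Moser step in Lemma~\ref{LpAvebig} raises the integrability from $2$ only to $q\le\frac{np}{n-2}$ with $p$ constrained by $p$-ellipticity, and iterating requires $p$-ellipticity at ever larger exponents, which fails once $p>p_0<\infty$. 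The absence of an $L^\infty$ (or Agmon--Miranda) bound is exactly the obstruction emphasized in the introduction, and it is the reason the paper cannot use the route you describe.

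The paper instead uses the Fefferman--Stein/Shen self-improvement, which works purely at the level of the fixed $(L^q,L^2)$ reverse H\"older inequality and never passes through $L^\infty$. One applies the $(L^q,L^2)$ estimate on the family of concentric balls $B_{sr}\subset B_{tr}$ to get
\[
\left(\dint_{B_{sr}\cap\Omega}|u|^q\right)^{1/q}\le C\,s^{n/q}t^{n/2}(t-s)^{n(1/q-1/2)}\left(\dint_{B_{tr}\cap\Omega}|u|^2\right)^{1/2},
\]
interpolates the $L^2$ average between $L^q$ and $L^p$ via H\"older with $\tfrac12=\tfrac{1-\theta}{q}+\tfrac{\theta}{p}$, sets $I(t)=(\dint_{B_{tr}}|u|^q)^{1/q}/(\dint_{B_r}|u|^p)^{1/p}$, and obtains $I(s)\le C(s,t)\,I(t)^{1-\theta}$. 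Choosing $s=t^{b}$ with $b^{-1}>1-\theta$, taking logarithms, and integrating in $t$ against $t^{-1}dt$ on $[1/2,1]$ then forces $I(1/2)\le C$. Replacing your stage~(iii) with this argument repairs the proof.
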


The first improvement is rather trivial, namely that the proof given in \cite{FMZ}  also holds for operators with lower order terms of the form $\mathcal L = \partial_{i}\left(A_{ij}(x)\partial_{j}\right) 
+B_{i}(x)\partial_{i}$ where $|B_i(x)| \le K\delta(x)^{-1}$ and $K$ is sufficiently small. This can be seen by examining the proof given in the paper.

The second improvement is that in the reverse H\"older inequality we can have any exponent $p>0$ on the right-hand side of the inequality. This observation is originally due to Fefferman-Stein \cite{FS} but we would like to refer the reader to a more recent exposition by Shen (\cite{S2}, Theorem 2.4) for more details than we provide here. The proof starts with knowledge that the reverse H\"older inequality holds for a specific pair of exponents $q>p$: in our situation when $q>2$, it holds for $p=2$. Then, an 
argument that employs the known reverse H\"older inequality on rescaled balls multiple times, ultimately yields
\begin{equation}
\left( \frac{1}{|B_{sr} \cap \Omega|} \int_{B_{sr}\cap \Omega}  |u|^q \, dx \right)^{\frac{1}{q}} \leq C s^{\frac{n}q}t^{\frac{n}2}(t-s)^{n(\frac1q-\frac12)} \left(\frac{1}{|B_{tr} \cap \Omega|} \int_{B_{tr} \cap \Omega}  |u|^2   \, dx \right)^{\frac{1}{2}},
\end{equation}
for any pair of boundary balls $B_{sr}$, $B_{tr}$ with same center and radii $sr$, $tr$ respectively for any $0<s<t<1$. Next, for a fixed $0<p<2$ we write $\frac12=\frac{1-\theta}{q}+\frac\theta{p}$ for $\theta\in(0,1)$. Using H\"older's inequality we have
\begin{eqnarray}
&& \left(\frac{1}{|B_{tr} \cap \Omega|} \int_{B_{tr} \cap \Omega}  |u|^2   \, dx \right)^{\frac{1}{2}}
\le\nonumber\\&&\left(\frac{1}{|B_{tr} \cap \Omega|} \int_{B_{tr} \cap \Omega}  |u|^q   \, dx \right)^{\frac{1-\theta}{q}}\left(\frac{1}{|B_{tr} \cap \Omega|} \int_{B_{tr} \cap \Omega}  |u|^p   \, dx \right)^{\frac{\theta}{p}}
 \end{eqnarray}
Finally, for $0<t<1$ let
$$I(t)=\left( \frac{1}{|B_{tr} \cap \Omega|} \int_{B_{tr}\cap \Omega}  |u|^q \, dx \right)^{\frac{1}{q}}\Bigg/\left(\frac{1}{|B_{r} \cap \Omega|} \int_{B_{r} \cap \Omega}  |u|^p   \, dx \right)^{\frac{1}{p}}.$$
Combining the two previous inequalities yields:
$$I(s)\le C s^{\frac{n}q}t^{\frac{n}2}(t-s)^{n(\frac1q-\frac12)} I(t)^{1-\theta}.$$
Next, we choose $s=t^b$ for some $b>1$ such that $b^{-1}>1-\theta$. We take log of the inequality above and then integrate it in $t$ with respect to $t^{-1}dt$ over the interval $[1/2,1]$. This finally yields
$$\left(\frac1b-\theta\right)\int_{1/2}^1\frac{\log I(t)}{t}dt\le C.$$
As $I(t)\ge cI(1/2)$ for all $t\in [1/2,1]$ we obtain $I(1/2)\le C$ which gives \eqref{eq-rh} when $p<2$. The case $p>2$ is easier as we can use the version of \eqref{eq-rh} when $p=2$ and then the usual H\"older inequality.

\begin{remark} Observe that we can apply same argument to the inequality \eqref{pAve-old}. Hence we have the following. For $u$ as in Lemma \ref{LpAvebig} we have
\begin{equation}\label{pAve}
\left(\dint_{B_{r}(x))} |u(y)|^q dy\right)^{1/q}
\lesssim  \left(\dint_{B_{2r}(x)} |u(y)|^p dy\right)^{1/p}
\end{equation}
for all $p>0$ and $q<p_0$ where $p_0=\sup\{p>1:\mbox{ matrix $A$ is $p$-elliptic}\}$.

\end{remark}

\section{Proof of Theorem \ref{MainThm}}

The proof is based on the following abstract result \cite{Sh1}, see also \cite[Theorem 3.1]{WZ} for a version on an arbitrary bounded domains. In both of these papers, the argument is
carried for the case $q=2$ below, but can be generalized as follows.

\begin{theorem}\label{th-sh} Let $\Omega$ be an open set in $\mathbb R^n$ and let $T$ be a bounded sublinear operator on $L^q(\partial \Omega;{\mathbb C}^m)$, $q>1$. Suppose that
for some $p>q$, $T$ satisfies the following $L^p$ localization property. For any ball $\Delta=\Delta_d\subset \partial \Omega$ and $C^\infty$ function $f$ supported in $\partial
\Omega \setminus 3\Delta$ the following estimate holds:
\begin{align}
&\left(|\Delta|^{-1}\int_\Delta|Tf|^p\,dx'\right)^{1/p}\le\label{eq-pf8}\\
&\qquad C\left\{\left(|2\Delta|^{-1}\int_{2\Delta}|Tf|^q\,dx'\right)^{1/q}+\sup_{\Delta'\supset \Delta}\left(|\Delta'|^{-1}\int_{\Delta'}|f|^q\,dx'\right)^{1/q}  \right\},\nonumber
\end{align}
for some $C>0$ independent of $f$. Then $T$ is bounded on $L^r(\partial \Omega;{\mathbb C}^m)$ for any $q\le r<p$.
\end{theorem}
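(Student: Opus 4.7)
The plan is to follow Shen's real-variable extrapolation scheme of \cite{Sh1,WZ}: combine the hypothesized $L^q$ boundedness of $T$ with the $L^p$ localization \eqref{eq-pf8} to derive a pointwise sharp-function type bound, convert it into a good-$\lambda$ inequality via a Whitney decomposition, and integrate to conclude. Throughout, $M$ denotes the Hardy-Littlewood maximal operator on $\partial\Omega$ (meaningful once $\partial\Omega$ is doubling, as in the intended chord-arc applications).

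First, for each boundary ball $\Delta$ I would split $f = f\chi_{3\Delta} + f\chi_{\partial\Omega\setminus 3\Delta} =: f_1 + f_2$. The $L^q \to L^q$ bound applied to $f_1$ yields
$$\left(|\Delta|^{-1}\int_\Delta |Tf_1|^q\,d\sigma\right)^{1/q}\le C\,\|T\|_{L^q\to L^q}\,\inf_{y\in\Delta}[M(|f|^q)(y)]^{1/q},$$
while \eqref{eq-pf8} applies directly to $f_2$ since it is supported outside $3\Delta$. Using sublinearity in the form $|Tf_2|\le|Tf|+|Tf_1|$ and $|f_2|\le|f|$, these combine into the single pointwise-averaged estimate
$$\left(|\Delta|^{-1}\int_\Delta |Tf|^p\,d\sigma\right)^{1/p}\le C\,\inf_{y\in\Delta}\bigl\{[M(|Tf|^q)(y)]^{1/q}+[M(|f|^q)(y)]^{1/q}\bigr\},$$
valid for every boundary ball $\Delta$.

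Next I would Whitney-decompose the open level set $E_\lambda:=\{M(|Tf|^q)>\lambda\}$ into disjoint boundary balls $\{\Delta_i\}$, each admitting a nearby point $y_i\notin E_\lambda$ so that $M(|Tf|^q)(y_i)\le\lambda$. Applying the pointwise bound above on a fixed enlargement of each $\Delta_i$ and invoking Chebyshev would yield, for appropriate $\alpha>1$ and small $\gamma>0$, the good-$\lambda$ inequality
$$\bigl|\{M(|Tf|^q)>\alpha\lambda\}\cap\{M(|f|^q)\le\gamma\lambda\}\bigr|\le C\gamma^{p/q}\alpha^{-p/q}\,|E_\lambda|.$$
Multiplying by $\lambda^{r/q-1}$ and integrating (layer-cake), with $\gamma$ chosen small enough that the self-improving coefficient is less than $\tfrac12$ (possible precisely because $r<p$), would deliver $\|M(|Tf|^q)\|_{L^{r/q}}\le C\|M(|f|^q)\|_{L^{r/q}}$, whence $\|Tf\|_{L^r}\le C\|f\|_{L^r}$ follows from the $L^{r/q}$-boundedness of $M$, which is valid since $r/q>1$.

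The principal obstacle I anticipate is legitimizing the self-absorption of $\|M(|Tf|^q)\|_{L^{r/q}}$ in the integrated estimate: this quantity must a priori be known to be finite before it can be subtracted from both sides. I would resolve this by a standard density/truncation argument --- apply the estimate first to $f\in L^q\cap L^r$ (and, if necessary, to a suitable truncation of $T$) for which finiteness is manifest, then extend to all of $L^r$ by density of $L^q\cap L^r$ in $L^r$.
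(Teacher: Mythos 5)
The paper does not actually prove Theorem \ref{th-sh}; it quotes it from \cite{Sh1} and \cite{WZ}, so your proposal is being measured against the standard Shen-type argument, whose overall architecture (split $f=f_1+f_2$, good-$\lambda$ for $M(|Tf|^q)$, integrate) you have correctly identified. However, there is a genuine error at the central step. Your ``single pointwise-averaged estimate''
$$\left(|\Delta|^{-1}\int_\Delta |Tf|^p\,d\sigma\right)^{1/p}\le C\,\inf_{y\in\Delta}\bigl\{[M(|Tf|^q)(y)]^{1/q}+[M(|f|^q)(y)]^{1/q}\bigr\}$$
is false: the local piece $Tf_1=T(f\chi_{3\Delta})$ is controlled only in $L^q(\Delta)$ (via the hypothesis that $T$ is bounded on $L^q$), not in $L^p(\Delta)$, so there is no way to place $|Tf|$ itself inside an $L^p$ average. (Test with $T=\mathrm{Id}$: \eqref{eq-pf8} holds trivially, yet the displayed inequality would force every $f\in L^q$ to lie in $L^p_{loc}$.) The error propagates: your good-$\lambda$ inequality carries only the factor $C\gamma^{p/q}\alpha^{-p/q}$, which could be made arbitrarily small by shrinking $\gamma$ for \emph{any} fixed $\alpha$, so your argument would prove $L^r$ boundedness for all $r\ge q$ and never actually uses $r<p$ --- a sure sign something is off, since the restriction $r<p$ is essential.

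The repair is to keep the two pieces separate all the way through the good-$\lambda$ step, which is exactly why Shen's lemma is phrased with a decomposition $|F|\le |F_B|+|R_B|$ rather than a reverse H\"older inequality for $F$ itself. On a Whitney ball $\Delta_i$ of $E_\lambda=\{M(|Tf|^q)>\lambda\}$ one reduces to estimating $M(\chi_{c\Delta_i}|Tf_1|^q)$ and $M(\chi_{c\Delta_i}|Tf_2|^q)$ separately: the first via the weak-$(1,1)$ bound for $M$ together with $\|Tf_1\|_q^q\le C\|f_1\|_q^q\le C|\Delta_i|\gamma\lambda$, contributing $C\gamma\alpha^{-1}|\Delta_i|$; the second via the weak-$(p/q,p/q)$ bound for $M$ together with \eqref{eq-pf8} (whose right-hand side is $\lesssim\lambda^{1/q}$ on $\Delta_i$), contributing $C\alpha^{-p/q}|\Delta_i|$. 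The resulting coefficient $C(\gamma\alpha^{-1}+\alpha^{-p/q})$ must beat $\alpha^{-r/q}$, and it is the additive term $\alpha^{-p/q}$ --- absent from your version --- that forces $r<p$: one first takes $\alpha$ large using $r<p$, then $\gamma$ small. Two further points you should address but which are routine: \eqref{eq-pf8} is hypothesized only for $C^\infty$ data, so applying it to the rough truncation $f\chi_{\partial\Omega\setminus 3\Delta}$ requires an approximation step; and your closing remark about a priori finiteness of $\|M(|Tf|^q)\|_{L^{r/q}}$ is a real issue whose resolution by truncation/density is acceptable.
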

In our case the role of $T$ is played by the sublinear operator $f\mapsto \tilde{N}_{2,a}(u)$, where $u$ is the solution of the Dirichlet problem ${\mathcal L}u=0$ with boundary data $f$. In the statement of the theorem above,
the specific enlargement factors ($2\Delta$, $3\Delta$) do not play a significant role. Hence it will suffice to establish estimate \eqref{eq-pf8}
with $2\Delta$ replaced by $8m\Delta$, and with $f$ vanishing on $16m\Delta$ for some $m>1$ to be determined later.

The operator $T:f\mapsto \tilde{N}_{2,a}(u)$ is sublinear and bounded on $L^q$, by assumption.
 To prove \eqref{eq-pf8} for this choice of $T$ we shall establish the following reverse H\"older inequality.
 
\begin{proposition}\label{P-RH} Let $a>0$ and let $\mathcal L$ and $p_0$ be as in Theorem \ref{MainThm}. Then for any $p\in (1,\frac{p_0(n-1)}{n-2})$ there exist $K(p)>0$ such that if $|B_i(x)|\le K(p)\delta(x)^{-1}$ holds for the first order coefficients of the operator $\mathcal L$ then
\begin{equation}
\left(\frac1{|\Delta|}\int_\Delta|\tilde{N}_{2,a}(u)|^p\,dx'\right)^{1/p}\le\label{eq-pf9}
C\left(\frac1{|8m\Delta|}\int_{8m\Delta}|\tilde{N}_{2,a}(u)|^q\,dx'\right)^{1/q},
\end{equation}
holds for all $1\le q\le p$. Here $u$ solves $\mathcal Lu=0$ in $\Omega$ and  $f=u\big|_{\partial\Omega}$ vanishes on $16m\Delta$.
\end{proposition}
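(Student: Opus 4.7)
My plan is to produce, for $\sigma$-a.e.\ $Q \in \Delta$, a pointwise bound on $\tilde N_{2,a}(u)(Q)$ in terms of boundary averages of $\tilde N_{2,a}(u)$ confined to $8m\Delta$, and then to extract the reverse H\"older inequality \eqref{eq-pf9} from this via Gehring-type self-improvement. The two main ingredients for the pointwise bound are the boundary reverse H\"older (Lemma~\ref{BRH}) and a Whitney covering argument relating bulk averages of $|u|^s$ to boundary averages of $\tilde N_{2,a}(u)^s$.

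I fix $Q \in \Delta$ and $x \in \Gamma_a(Q)$ and split by $\delta(x)$. When $\delta(x) \le c_0 r$, for $c_0 = c_0(m,a)$ small, let $P_x$ be the nearest boundary point to $x$ and set $B = B(P_x, 4\delta(x))$; then $B_{\delta(x)/2}(x) \subset B$ and, for $c_0$ small enough, $2B \cap \partial\Omega \subset 16m\Delta$, so $\Tr u \equiv 0$ on $2B \cap \partial\Omega$. Fixing $q_0 \in (2, p_0)$, Lemma~\ref{BRH} gives, for every $s > 0$,
\[
u_2(x) \le C\left(\dint_{B\cap\Omega} |u|^{q_0}\right)^{1/q_0} \le C\left(\dint_{2B\cap\Omega} |u|^s\right)^{1/s},
\]
the first inequality by Jensen (since $q_0 \ge 2$) and the second by the full strength of Lemma~\ref{BRH}. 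A Whitney decomposition of $2B \cap \Omega$ into interior balls $B_k = B_{\delta(z_k)/4}(z_k)$, combined with $u_2(z_k) \le \tilde N_{2,a}(u)(Q')$ for $Q' \in \Delta_k := \Delta(P_{z_k}, a\delta(z_k))$, the ratio $|B_k|/|\Delta_k| \sim \delta(z_k) \lesssim \delta(x)$, and bounded overlap, yields
\[
u_2(x) \le C\left(\dint_{\tilde\Delta} \tilde N_{2,a}(u)^s\, d\sigma\right)^{1/s},
\]
with $\tilde\Delta \subset 8m\Delta$ of radius $\lesssim \delta(x)$.

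In the complementary case $\delta(x) > c_0 r$, the inequality $|x - Q| \le (1+2a)\delta(x)$ gives $Q \in S_{2a}(x)$, hence $\Delta(Q, 2a\delta(x)) \subset \tilde S_{2a}(x)$ and $x \in \Gamma_{2a}(Q')$ for every $Q' \in \Delta(Q, 2a\delta(x))$. Averaging the bound $u_2(x) \le \tilde N_{2,2a}(u)(Q')$ over $Q' \in \Delta(Q, R)$ with $R := \min(c_1 r, 2a\delta(x)) \gtrsim r$ (chosen so that $\Delta(Q, R) \subset 8m\Delta$), and using the local aperture-comparability of $\tilde N_{2,a}$ and $\tilde N_{2,2a}$ (Proposition~3.5 of \cite{DPcplx}), gives a bound of the same form. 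Combining the two cases produces the pointwise estimate
\[
\tilde N_{2,a}(u)(Q) \le C\,\bigl(M_{\partial\Omega}[\tilde N_{2,a}(u)^s \chi_{8m\Delta}]\bigr)^{1/s}(Q),
\]
in which the Hardy--Littlewood maximal function $M_{\partial\Omega}$ is effectively restricted to scales $\lesssim r$.

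The main obstacle is to deduce \eqref{eq-pf9} from this pointwise bound: a direct H--L argument yields only the trivial $\|\tilde N\|_{L^p(\Delta)} \lesssim \|\tilde N\|_{L^p(8m\Delta)}$. Instead, one extracts from the pointwise bound a weak reverse H\"older inequality $(\dint_{B'} \tilde N^q)^{1/q} \lesssim (\dint_{CB'} \tilde N^s)^{1/s}$ for $s < q$ and boundary balls $B' \subset 8m\Delta$, and then applies Gehring's self-improvement lemma, iterating the exponent up to the critical value $\frac{p_0(n-1)}{n-2}$ dictated by the $q_0$-ellipticity ($q_0 < p_0$) of $A$ and the ADR scaling of $\partial\Omega$. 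In the unbounded-domain setting, the global $K$-smallness of $B$ is what keeps all constants uniform when treating $x$ with $\delta(x) \gg r$, where $\tilde S_{2a}(x)$ spills beyond $8m\Delta$ and the averaging must be restricted to $\Delta(Q, R) \cap 8m\Delta$.
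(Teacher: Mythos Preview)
Your argument has a genuine gap at the key step. The pointwise bound you derive,
\[
\tilde N_{2,a}(u)(Q)\le C\bigl(M_{\partial\Omega}[\tilde N_{2,a}(u)^{s}\chi_{8m\Delta}]\bigr)^{1/s}(Q),
\]
is \emph{content-free}: by Lebesgue differentiation, every locally integrable nonnegative function $g$ on $\partial\Omega$ satisfies $g(Q)\le (M_{\partial\Omega}[g^{s}])^{1/s}(Q)$ for $\sigma$-a.e.\ $Q$, with constant $1$. The reason your bound degenerates is that in the regime $\delta(x)\le c_{0}r$ the surface ball $\tilde\Delta$ produced by your Whitney argument has radius $\sim\delta(x)$ and contains $Q$ (up to a bounded dilation), and $\delta(x)$ can be arbitrarily small. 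Consequently the subsequent claim that one ``extracts from the pointwise bound a weak reverse H\"older inequality $(\dint_{B'}\tilde N^{q})^{1/q}\lesssim (\dint_{CB'}\tilde N^{s})^{1/s}$ for $s<q$'' cannot be correct: no nontrivial reverse H\"older follows from an inequality that holds tautologically. Taking $L^{p}$ norms of your pointwise bound and applying Hardy--Littlewood yields only the trivial $\|\tilde N\|_{L^{p}(\Delta)}\lesssim\|\tilde N\|_{L^{p}(8m\Delta)}$, exactly as you yourself observe; Gehring does not rescue this, since there is no initial gain to self-improve. Even setting that aside, Gehring produces an unspecified $\varepsilon$-improvement, not the sharp threshold $\frac{p_{0}(n-1)}{n-2}$ you assert.

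What is missing is a mechanism that actually produces a gain in integrability, and that is where the factor $\frac{n-1}{n-2}$ must enter. In the paper this comes from working with the auxiliary function $v=|u|^{s/2-1}u$ (for $s\in(p_{0}',p_{0})$) and proving, via a Harnack-chain telescoping argument using that $u$ vanishes on the boundary, the estimate
\[
v_{2}(x)\lesssim (hd)^{1/2}\left(d^{-1}\int_{\Gamma^{2d}_{\tilde a}(P)}|\nabla v|^{2}\delta^{1-n}\,dz\right)^{1/2}
\]
for $x\in\Gamma_{a}(Q)$ with $\delta(x)=h\le d$ and $P\in C(Q,h)$. Averaging in $P$ turns this into a fractional integral of order $1/2$ in the boundary variable, and Hardy--Littlewood--Sobolev on $\partial\Omega$ converts the $L^{2}$ norm of the square-function $A(\nabla v)$ into an $L^{r}$ bound on $\mathcal M_{1}(v_{2})$ with $\frac{1}{r}=\frac{1}{2}-\frac{1}{2(n-1)}$, i.e.\ $r=\frac{2(n-1)}{n-2}$. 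The first (gradient) inequality of Lemma~\ref{BRH} then controls $\int|\nabla v|^{2}=\int|u|^{s-2}|\nabla u|^{2}$ by $\int|u|^{s}$ on a larger Carleson region, and only at that final stage does one pass back to boundary averages of $\tilde N$. Your use of Lemma~\ref{BRH} invokes only the zeroth-order reverse H\"older \eqref{eq-rh}, which goes \emph{downward} in exponent and cannot by itself supply any Sobolev-type gain; the boost to $p=\frac{s(n-1)}{n-2}$ comes entirely from the fractional-integral step, which has no analogue in your outline.
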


 We are also free to choose the aperture $a$ for which we establish \eqref{eq-pf9}.
The norms of the nontangential maximal function operators with varying apertures are
all equivalent, up to a constant that depends on this aperature. (See \cite{MPT} or \cite{HMT} for a proof of this on CAD domains.)
With \eqref{eq-pf9} in hand, Theorem \ref{th-sh} gives
\begin{equation}\label{eq-pf10}
\|\tilde{N}_{2,a}(u)\|_{L^{r}({\BBR}^{n-1})}\le C\|f\|_{L^{r}({\BBR}^{n-1})},
\end{equation}
establishing $L^r$ solvability of the Dirichlet problem for the operator $\mathcal L$ for $q \leq r < p$, thus proving Theorem \ref{MainThm}.

\medskip

It remains to establish Proposition \ref{P-RH}. Let us define
\begin{align}\label{eq-pf11}
&{\mathcal M}_1(u)(Q)=\sup_{y\in\Gamma_a(Q)}\{u_2(y):\,\delta(y)\le d\},\\
&{\mathcal M}_2(u)(Q)=\sup_{y\in\Gamma_a(Q)}\{u_2(y):\,\delta(y)> d\}.\nonumber
\end{align}
Here $d=\mbox{diam}(\Delta)$ and $u_2$ is the $L^2$ average of $u$
$$u_2(y)=\left(\dint_{B_{\delta(y)/2}(y)}|u(z)|^2\,dz\right)^{1/2}.$$
It follows that
$$\tilde{N}_{2,a}(u)=\max\{{\mathcal M}_1(u),{\mathcal M}_2(u)\}.$$
We first estimate ${\mathcal M}_2(u)$. Pick any $Q \in\Delta$, and to this end we prove the following proposition,
which requires our modified corkscrew regions.

\begin{proposition}\label{Prop}
Let $\Delta$ be a boundary ball of radius $d$ and let $Q \in \Delta$.
Then for any $y\in\Gamma(Q)$ with $\delta(y)>d$, the set
$A := \{P \in 2\Delta: y \in \Gamma_a(P)\}$
has size comparable to $2\Delta$. 
\end{proposition}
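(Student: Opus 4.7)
My plan is to use the fat-shadow structure of $\tilde{S}_a(y)$ to exhibit a large surface ball inside $\tilde{S}_a(y)$ near $c_\Delta$, and intersect this with $2\Delta$. Unpacking $y \in \Gamma_a(Q)$ via the definitions: $Q \in \tilde{S}_a(y)$ yields a boundary point $Q_0 \in S_a(y)$ (so $|y - Q_0| < (1+a)\delta(y)$) with $|Q - Q_0| < a\delta(y)$. The crucial observation is that by construction the entire surface ball $\Delta(Q_0, a\delta(y))$ sits inside $\tilde{S}_a(y)$, so every boundary point $P$ in this ball satisfies $y \in \Gamma_a(P)$. This immediately gives
\[
A \;\supseteq\; 2\Delta \cap \Delta(Q_0, a\delta(y)).
\]
The triangle inequality combined with $Q \in \Delta$ yields $|c_\Delta - Q_0| < d + a\delta(y)$.

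Since the $\tilde N_{2,a}$ norms for different apertures are equivalent on CAD, we may enlarge $a$ so that $a\delta(y) > 3d$; then the Euclidean overlap depth along the line of centers,
\[
2d + a\delta(y) - |c_\Delta - Q_0| \;>\; d,
\]
is a positive fraction of the radius of $2\Delta$. Using Ahlfors--David regularity together with the corkscrew/Harnack-chain properties of the chord-arc domain, one then locates a boundary point $P^* \in \partial\Omega$ inside the Euclidean lens $B(c_\Delta, 2d) \cap B(Q_0, a\delta(y))$ with the property that a whole surface ball $\Delta(P^*, c_0 d)$ sits inside $2\Delta \cap \Delta(Q_0, a\delta(y))$ for some absolute constant $c_0 > 0$ depending on the CAD constants. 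ADR then gives
\[
|A| \;\ge\; \sigma(\Delta(P^*, c_0 d)) \;\gtrsim\; d^{n-1} \;\sim\; |2\Delta|,
\]
as required.

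The main obstacle is the last step --- producing $P^*$. For a flat boundary $\partial\Omega = \mathbb{R}^{n-1}$ one directly takes $P^*$ on the segment from $c_\Delta$ to $Q_0$ at distance $\sim d$ from $c_\Delta$, which automatically lies on $\partial\Omega$; but for a general CAD this segment need not meet the boundary, and one must instead use ADR (and ultimately the uniform rectifiability implied by the CAD hypothesis) to show that $\partial\Omega$ occupies the Euclidean lens substantially, in a region of surface measure $\gtrsim d^{n-1}$. This is precisely the geometric property the modified corkscrew regions $\Gamma_a$ were designed to enforce, which is why the proof requires the fattening in the definition of $\tilde{S}_a(y)$ rather than the standard $\gamma_a$.
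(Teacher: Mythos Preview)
Your core approach is the paper's: from $y\in\Gamma_a(Q)$ extract $Q_0\in S_a(y)$ with $|Q-Q_0|<a\delta(y)$, observe that the entire surface ball $\Delta(Q_0,a\delta(y))$ lies in $\tilde S_a(y)$, so that $A\supset 2\Delta\cap\Delta(Q_0,a\delta(y))$, and then bound the measure of this intersection from below. The paper does this in three sentences, working with $\Delta(Q,d)$ (which sits inside $2\Delta$ since $Q\in\Delta$) rather than with the center $c_\Delta$, and simply asserting that $\delta(y)>d$ together with Ahlfors--David regularity give $\sigma\big(\Delta(Q,d)\cap\Delta(Q_0,a\delta(y))\big)>c\,d^{n-1}$.

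Your detour through $c_\Delta$, the aperture enlargement to force $a\delta(y)>3d$, the Euclidean overlap-depth computation, and finally the appeal to uniform rectifiability plus corkscrew/Harnack-chain structure to manufacture a boundary point $P^*$ inside the lens, is machinery the paper never invokes. More to the point, you yourself label producing $P^*$ as ``the main obstacle'' and then do not actually carry it out --- you only assert that ADR and UR ``must'' be used. So your write-up is simultaneously heavier than the paper's and no more complete at the one delicate step. The paper's position is that ADR alone (no UR, no corkscrew condition) handles the measure lower bound once one notes that the boundary point $Q$ lies in both surface balls and that $\delta(y)>d$; whatever one thinks of the brevity of that claim, your version neither simplifies it nor fills it in.
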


\begin{proof}
Since $y\in\Gamma(Q)$, there exists a $Q_0 \in S_a(y)$ such that $Q \in \Delta(Q_0, a\delta(y))$. 
From the fact that $\delta(y) > d$ and using the Ahlfors-David regularity, we have that
$\sigma(\Delta(Q, d) \cap \Delta(Q_0,a\delta(y))) > cd^{n-1}$, for a constant $c$ depending
only on $a$ and on the chord-arc geometry. Moreover,
$\Delta(Q, d) \cap \Delta(Q_0,a\delta(y)) \subset A$, which proves the proposition.

\end{proof}

Moreover,
$$P \in A\quad\Longrightarrow\quad y\in\Gamma_a(P)\quad\Longrightarrow\quad u_2(y)\le \tilde{N}_{2,a}(u)(P).$$
Hence for any $Q \in\Delta$,
$${\mathcal M}_2(u)(Q)\le C\left(\frac1{|2\Delta|}\int_{2\Delta} \left[\tilde{N}_{2,a}(u)(P)\right]^q\,d\sigma(P)'\right)^{1/q}.$$
It remains to estimate ${\mathcal M}_1(u)$ on $\Delta$. Consider any $s \in (p'_0,p_0)$.
Recall the expressions of the form $|u|^{s/2-1}u$ that arise in Lemmas \ref{LpAvebig} and 
\ref{LpAve}, and set $v = |u|^{s/2-1}u$.
As in \eqref{w}, $v_2$ denotes the $L^2$ average over the appropriate interior ball.

We next claim that, given the fact that $u$ vanishes on $3\Delta\subset m\Delta$, we have for any $Q \in \Delta$ and 
for $x \in \Gamma_a(Q)$ with $\delta(x) = h$, and for $P \in C(Q,h):=\{P: x \in \Gamma_a(P), \,\,h/2 < |P -Q| < h \}$,

\begin{equation}\label{Conebound}
v_2(x)  \lesssim (hd)^{1/2} A_{\tilde{a}}(\nabla v)(P)
\end{equation}
where 
$$A_{\tilde{a}}^2(\nabla v)(P) = d^{-1} \int_{\Gamma_{\tilde{a}}^{2d}(P)} |\nabla v|^2(z) \delta(z)^{1-n} dz.$$
Here, the parameter $\tilde{a}$ will be determined later, and the truncated corkscrew region is defined to be 
$\Gamma_{\tilde{a}}^{2d}(P):= \Gamma_{\tilde{a}}(P) \cap B(P,2d)$.

\medskip

Proof of \eqref{Conebound}.  For any $P \in C(Q,h)$, since $x \in \Gamma_a(Q)$, it follows that $x \in \Gamma_{1+2a}(P)$ and so there is a sequence of corkscrew points $x_j$ associated to
 the point $P$ at scales 
$r_j \approx 2^{-j}h$, $j=0,1,2,\dots$ with $x_0=x$. By the Harnack chain condition, for each $j$ there is a number $N$ and a constant $C$ such that
there exists  $n \leq N$ balls $B^{(j)}_k$ of radius $\approx 2^{-j}h$ with $C B^{(j)}_k \subset \Omega$, $x_{j-1} \in B^{(j)}_1$, 
$x_j \in B^{(j)}_n$, and $B^{(j)}_k \cap  B^{(j)}_{k+1} \ne \emptyset$. 
Therefore we can find another chain of balls with the same properties for a larger but fixed choice of $N$ so that
$4 B^{(j)}_k \subset \Omega$ 
and $B^{(j)}_{k+1} \subset 2B^{(j)}_{k}$.

Considering the whole collection of balls $B_k^{(j)}$ for all $j=0,1,2,\dots$ and $k=1,2,\dots,n(j)\le N$ it follows that we have an infinite chain of balls, the first of which contains $x_0$, converging to the boundary point $P$, with the property that any pair of consecutive balls in
the chain have roughly the same radius and whose 4-fold enlargements are contained in $\Omega$. We relabel these
balls $B_j(x_j,r_j)$ with centers $x_j$ and radii $r_j \approx t^{-j}h$ for some $t < 1$ depending on $N$. 

We next claim that, for any $\varepsilon > 0$, 

\begin{equation}\label{Averages}
\left|\dint_{B_j} |v|^2(z) dz - \dint_{B_{j+1}} |v|^2(z) dz\right| \leq \varepsilon t^{-j} \dint_{B_j} |v|^2(z) dz + C_{\varepsilon, t} h \int_{2B_{j}} |\nabla v|^2(z) \delta(z)^{1-n} dz
\end{equation}

The argument proceeds in two steps.
The first step is to obtain \eqref{Averages} but with $2B_j$ on the left hand side. That is,

\begin{equation}\label{Averages2}
\left|\dint_{B_j} |v|^2(z) dz - \dint_{B_{j+1}} |v|^2(z) dz\right| \leq \varepsilon t^{-j} \dint_{2B_j} |v|^2(z) dz + C_{\varepsilon, t} h \int_{2B_{j}} |\nabla v|^2(z) \delta(z)^{1-n} dz
\end{equation}

In the second step, we show that
\begin{equation}\label{shrink}
\dint_{2B_j} |v|^2(z) dz \leq  \dint_{B_j} |v|^2(z) dz + \varepsilon t^{-j} \dint_{2B_j} |v|^2(z) dz +  
C_{\varepsilon, t} h \int_{2B_{j}} |\nabla v|^2(z) \delta(z)^{1-n} dz
\end{equation}

From \eqref{shrink}, we choose $\varepsilon>0$ small enough to see that 
\begin{equation}\label{shrink2}
\dint_{2B_j} |v|^2(z) dz \leq  2\dint_{B_j} |v|^2(z) dz +  C_{\varepsilon, t} h \int_{2B_{j}} |\nabla v|^2(z) \delta(z)^{1-n} dz
\end{equation}
and use this estimate in \eqref{Averages2} to obtain \eqref{Averages}.
Here, and in the following estimates, the constant $C_{\varepsilon,t}$ is not necessarily the same at each occurrence.

The arguments for \eqref{Averages2} and \eqref{shrink} are essentially the same - both are
essentially Poincar\'e-type inequalities with an application of Cauchy-Schwarz. We give the argument assuming that $v$ is differentiable, which
can be justified by replacing $v$ by a smooth approximation in the Sobolev space.

To prove \eqref{Averages2}, define the map $T(x) =  r_j/r_{j+1}(x-x_j) + x_{j+1}$ from $B_j$ to $B_{j+1}$. (In the case of \eqref{shrink}, $T$ is just dilation.)
Then
\begin{equation}
|v^2(T(x)) - v^2(x)| \leq \int_{\ell \in [x, T(x)]} |\nabla v^2(\ell)|\,\, d\ell
\end{equation}
where $ [x, T(x)]$ is the line from $x$ to $T(x)$.

Averaging $x$ over $B_j$, using the triangle inequality, and observing that the collection of lines $[x,T(x)]$ is contained 
in $2B_j$, gives 
\begin{equation}\label{FTC}
 \left|\dint_{B_j} |v|^2(z) dz - \dint_{B_{j+1}} |v|^2(z) dz\right| \leq  C't^{-j} h \dint_{2B_{j}} |v(z)| |\nabla v(z)| dz
\end{equation}

Applying Cauchy-Schwarz to the right hand side of \eqref{FTC} gives \eqref{Averages2}, noting that 
$\delta(z) \approx t^{-j} h$.

The claim \eqref{Conebound} results from summing the averages in \eqref{Averages} as follows.

Set $$U_j := \dint_{B_j} |v|^2(z) dz - \dint_{B_{j+1}} |v|^2(z) dz$$
Because $u$ vanishes on the boundary, the averages $\dint_{B_j} |v|^2(z) dz $ are converging to zero (\cite{DPcplx}, section 7). Therefore, for 
any choice of $\eta >0$, we choose $M$ large enough so that
$\dint_{B_{M}} |v|^2(z) dz < \eta$, and for $j<M$,
$$\dint_{B_j} |v|^2(z) dz = \sum_{k=j}^M U_k + \dint_{B_M} |v|^2(z) dz < \sum_{k=j}^M |U_k| + \eta.$$

From \eqref{Averages} together with the fact that the collection $2B_j$ has finite overlap and the union belongs to $\Gamma^d_a(y')$, 
\begin{equation}\label{Dsum}
\dint_{B_0} |v|^2(z) dz \leq \sum_{j=0}^M |U_j| + \eta \leq \eta + \varepsilon \sum_{j=0}^M t^{-j} (\sum_{k=j}^M |U_k| + \eta) +
C_{\varepsilon,t} h \int_{\Gamma_{\tilde{a}}^{2d}(P)} |\nabla v|^2 \delta(z)^{1-n} dz
\end{equation}
where the aperture $\tilde{a}$ is chosen sufficiently large (depending only on the constants defining the geometry of the domain $\Omega$) such that for each $j\ge 0$ we have
$2B_j \subset B(x_j, \delta(x_j)/2) \subset \Gamma_{\tilde{a}}^{2d}(P)$.

Interchanging the order of summation,
$
 \sum_{j=0}^M t^{-j} \sum_{k=j}^M |U_k| \leq \sum_{k=0}^M |U_k| \sum_{j \leq k} t^{-j} 
 $
makes it apparent that if we now choose $\varepsilon$ so that $\varepsilon < (1-t)/2$
in \eqref{Dsum}, then we have 
 \begin{equation}\label{sumbound}
 \dint_{B_0} |v|^2(z) dz \leq \sum_{j=0}^M |U_j| + \eta \leq 
C_{\varepsilon,t} h \int_{\Gamma_{\tilde{a}}^{2d}(P)} |\nabla v|^2 \delta(z)^{1-n} dz + 2\eta. 
\end{equation}
.

Letting $\eta \to 0$ gives a variant of \eqref{Conebound}, for the average $\dint_{B_0} |v|^2(z) dz$ rather 
than for $v^2_2(x) = \dint_{B(x, \delta(x)/2)} |v|^2(z) dz$. However, the estimate for $\dint_{B_0} |v|^2(z) dz$ is sufficient by
an argument exactly like that for \eqref{shrink2}.

\medskip

We use \eqref{Conebound} to average over $P \in C(Q,h)$. (We now omit reference to the aperture.)
Since $\sigma(C(Q,h))\approx h^{n-1}$, we have for $x \in \Gamma(Q) $ :

\begin{equation}\label{Abound}
v_2(x)  \lesssim h^{3/2-n}d^{1/2} \int_{C(Q,h)} A(\nabla v)(P) d\sigma(P)
\lesssim  d^{1/2} \int_{C(Q,h)} \frac{A(\nabla v)(P)}{|P - Q|^{n-3/2}} d\sigma(P)
\end{equation}

Because $C(Q, h) \subset 2\Delta$, we see that
\begin{equation}\label{M1bound}
\mathcal M_1(v_2)(Q) \lesssim d^{1/2} \int_{2\Delta} \frac{A(\nabla v)(P)}{|P - Q|^{n-3/2}} d\sigma(P)
\end{equation}

By the fractional integral estimate, this implies that
\begin{eqnarray}\label{eq-pf13}
\left(\frac1{|\Delta|}\int_\Delta[{\mathcal M}_1(v_2)(P)]^r\,d\sigma(P)\right)^{1/r}\le & \nonumber \,Cd
\left(\frac1{|2\Delta|}\int_{2\Delta}[A(\nabla v)(P)]^2\,d\sigma(P)\right)^{1/2}\\
\le &Cd \left(\frac1{|T(m\Delta)|}\int_{T(m\Delta )} |\nabla v(x)|^2\,dx\right)^{1/2}
\end{eqnarray}
where $\frac1r=\frac12-\frac1{2(n-1)}$ and $m=m(\tilde{a})>2$ is such that $T(m\Delta)$ contains all points $x\in \Gamma_{\tilde{a}}^{2d}(P)$ 
for any $P\in 2\Delta$.

To further estimate \eqref{eq-pf13} we use the Lemma \ref{BRH}, recalling that $|\nabla v|^2 = |u|^{s-2}|\nabla u|^2$:
$$\left(\frac1{|T(m\Delta)|}\int_{T(m\Delta)} |\nabla v(x)|^2\,dx\right)^{1/2}
\lesssim d^{-1}\left(\frac1{|T(2m\Delta)|}\int_{T(2m\Delta)} |u(x)|^s\,dx\right)^{1/2},$$
whenever the solution $\mathcal Lu=0$ vanishes on at least $3m \Delta$.

By Lemma \ref{BRH}, we therefore have that
\begin{eqnarray}\label{eq-pf13a}
\left(\frac1{|\Delta|}\int_\Delta[{\mathcal M}_1(v_2)(P)]^r\,d\sigma(P)\right)^{1/r}\le & \nonumber \,C
\left(\frac1{|T(2m\Delta)|}\int_{T(2m\Delta)} |u(x)|^s\,dx\right)^{1/2}\\
\le &C \left(\frac1{|T(4m\Delta)|}\int_{T(4m\Delta)} |u(x)|^q\,dx\right)^{s/2q}
\end{eqnarray}

Rewriting \eqref{eq-pf13a} in terms of $u$, and choosing $rs/2 = p$, gives

\begin{eqnarray}\nonumber
\left(\frac1{|\Delta|}\int_\Delta[{\mathcal M}_1(u_s)(P)]^p\,d\sigma(P)\right)^{1/p}\le &
C \left(\frac1{|T(4m\Delta)|}\int_{T(4m\Delta)} |u(x)|^q\,dx\right)^{1/q} \\
\le &C \left(\frac1{|4m\Delta|}\int_{4m\Delta} [\tilde N_q(u)(P]^q \,d\sigma(P)\right)^{1/q}.\nonumber 
\end{eqnarray}

The final step is to replace the $u_s$ averages by $u_2$ ones, as well as the $\tilde{N}_q$ by $\tilde{N}_2$. This can be done thanks to \eqref{pAve} (see \cite[Proposition 3.5]{DPcplx} or the corresponding statement in \cite{FMZ}) to give us 
\begin{equation}
\left(\frac1{|\Delta|}\int_\Delta[{\mathcal M}_1(u_2)(P)]^p\,d\sigma(P)\right)^{1/p}\le
C \left(\frac1{|8m\Delta|}\int_{8m\Delta} [\tilde N_2(u)(Q)]^q \,d\sigma(Q)\right)^{1/q} 
\end{equation}

We now conclude that \eqref{eq-pf9}
holds for $p=rs/2$. Since $r=2(n-1)/(n-2)$ this implies that $p=s(n-1)/(n-2)$. Given that we can take any $s\in (p_0',p_0)$
the claim of Proposition \ref{P-RH} follows.
\mbox

\end{document}